\let\vec\relax
\DeclareMathAccent{\vec}{\mathord}{letters}{"7E}
\else \newtheorem{theorem}{Theorem}[section]\fi
\else \newtheorem{proposition}[theorem]{Proposition}\fi
\else \newtheorem{definition}[theorem]{Definition}\fi
\else \newtheorem{lemma}[theorem]{Lemma}\fi
\else \newtheorem{corollary}[theorem]{Corollary}\fi
\else \newtheorem{remark}[theorem]{Remark}\fi
\else \newtheorem{assumption}{Assumption}\fi
\newcommand{\pRR}{\RR_+}
\newcommand{\spRR}{\RR_{+}^*}
\newcommand{\Borel}{\mathcal{B}}
\newcommand{\StageCost}{L}
\newcommand{\Final}{K}
\newcommand{\ConstraintSet}{\mathcal{U}}
\newcommand{\SolutionSet}{\mathcal{U}\opt}
\renewcommand{\ValueFunction}{V}
\newcommand{\QFunction}{Q}
\newcommand{\indicator}{\iota}
\newcommand{\param}{p}
\newcommand{\Param}{\mathcal{P}}
\newcommand{\PARAM}{\mathbb{P}}
\newcommand{\ParamAd}{\mathcal{P}_{\text{ad}}}
\newcommand{\nstate}{n_\state}
\newcommand{\ncontrol}{n_\control}
\newcommand{\nuncertain}{n_\uncertain}
\newcommand{\nparam}{n_\param}
\newcommand{\prof}{p}
\newcommand{\del}{r} 
\newcommand{\gen}{g}
\newcommand{\soc}{s}
\newcommand{\rvControl}{\va{\Control}}
\newcommand{\rvState}{\va{\State}}
\newcommand{\rvUncertain}{\va{\Uncertain}}
\newcommand{\minBat}{\underline{u}}
\newcommand{\maxBat}{\overline{u}}
\newcommand{\battery}{(\kappa, \minBat, \maxBat, \rho_c, \rho_d)}
\newcommand{\peak}{\overline{q}}
\newcommand{\Penalty}{L^\text{p}}
\newcommand{\Energy}{L^\text{e}}
\DeclareMathAccent{\wtilde}{\mathord}{largesymbols}{"65}
\newcommand{\SmoothValueFunction}{\underaccent{\wtilde}{V}^\mu}
\newcommand{\RawSmoothValueFunction}{\underaccent{\wtilde}{V}}
\newcommand{\RawSmoothQFunction}{\underaccent{\wtilde}{Q}}
\newcommand{\statebis}{\tilde{\state}}
\newcommand{\PolyhedralValueFunctionbis}{\underline{\ValueFunction}}
\newcommand{\maxiter}{k}
\renewcommand{\PARAM}{\RR^{\nparam}} 
\newcommand{\PARAMslice}{\RR^{\nparam}}
\renewcommand{\STATE}{\RR^{n_x}}
\renewcommand{\CONTROL}{\RR^{n_u}}
\renewcommand{\UNCERTAIN}{\RR^{n_w}}
\newcommand{\Val}{\ ]{-}\infty, +\infty]}
\newcommand{\Pairxu}{Y}
\newcommand{\sep}{;}
\def\keywords#1{\par\addvspace\medskipamount{\rightskip=0pt plus1cm
\def\and{\ifhmode\unskip\nobreak\fi\ $\cdot$
}\noindent\keywordname\enspace\ignorespaces#1\par}}
\def\keywordname{{\bfseries Keywords}}%
\title{Differentiability and Regularization \\
  of Parametric Convex Value Functions \\
  in Stochastic Multistage Optimization}
\author[1]{Adrien~Le~Franc}
\author[2]{Pierre~Carpentier}
\author[3]{Jean-Philippe~Chancelier}
\author[3]{Michel De~Lara}
\affil[1]{LAAS CNRS, Toulouse, France}
\affil[2]{UMA, ENSTA Paris, IP Paris, Palaiseau, France}
\affil[3]{CERMICS, ENPC, Institut Polytechnique de Paris, CNRS, Marne-la-Vallée, France}
\begin{document}
\maketitle

\begin{abstract}
  In multistage decision problems, it is often the case that an initial
  strategic decision (such as investment) is followed by a sequence of operational ones
  (operating the investment).
  Such initial strategic decision can be seen as a parameter affecting a
  multistage decision problem.
  More generally, we study in this paper a standard multistage stochastic
  optimization problem depending on a parameter chosen at the initial stage.
  When the parameter is fixed, Stochastic Dynamic Programming
  provides a way to compute the optimal value of the problem.
  Thus, the value function depends both on the state (as usual) and on the parameter.
  Our aim is to  investigate on the possibility to efficiently compute
  gradients of the value function with respect
  to the parameter, when these objects exist.
  When nondifferentiable, we propose a regularization method based on
  the Moreau-Yosida envelope. We present a numerical test case
  from day-ahead power scheduling.
\end{abstract}

\keywords{Stochastic multistage optimization\and Dynamic Programming\and
  Marginal function\and Differentiability\and Moreau-Yosida regularization}

\section{Introduction}

We consider optimization problems where an \emph{upstream} decision
is made in the first place, which stands for a parameter for a
\emph{downstream} multistage stochastic optimization problem.
Our work is motivated by applications in the field of energy
planning, where such decision structure arises naturally.
As a typical example, the regulatory rules considered
in~\cite{team2015impact, n2019optimal, pflaum2017battery}
impose renewable power plants
to commit a day-ahead power production profile,
upstream to the intraday management phase
where costs are subject to uncertainties
arising from power production.
Another important application arises when dealing with
large-scale stochastic multistage optimization problems. In many
cases, dualizing some coupling constraint allows for decomposition
into subproblems, each corresponding to a ``small'' stochastic
multistage optimization problem. The  Lagrange multiplier associated
with the coupling constraint has then to be optimized, and can be interpreted
as a parameter for the multistage stochastic optimization subproblems
(see \cite{brown2022strength,carpentier2020mixed} for details).

In this article, we propose a standard formulation for
\emph{parametric multistage stochastic optimization problems} (PMSOP).
In the formulation we outline, the parameter does not affect the dynamics
but affects all instantaneous (and final) costs at all periods,
and also possibly the admissible control sets;
due to this structure, the parameter cannot be identified with the initial
decision of a multistage stochastic optimization problem.
When the value of the parameter is fixed, Stochastic Dynamic Programming
(see e.g. \cite{bertsekas1995dynamic, puterman94}) is a way to obtain
the value of the downstream problem by computing the value functions
given by the Bellman equation. Thus, the value functions now depend
both on the state (as usual) and on the parameter.
On top of that, we investigate on the possibility to efficiently compute
additional first-order information, e.g. gradients of the value functions
with respect to the parameter, when these objects exist.
Our end goal is to formulate first-order oracles
which let us enter the world of (primal)
first-order optimization methods (see \cite{beck2017first}
for a recent survey) to solve PMSOPs, that is, to perform
optimization with respect to the parameter.

Of course, the interest in such kinds of problems is not new.
The reference textbook of Bonnans and Shapiro \cite{bonnans2013perturbation}
gathers numerous results
on the value functions of a parameterized optimization problem.
In the context of multistage stochastic programming,
the sensitivity analysis of the value of a downstream problem
with respect to some model parameters has been already studied
in~\cite{cen:inria-00579668, guigues2023duality, tercca2020envelope}.
These works mainly
focus on the computation of directional derivatives of the value
function, in the case where the stage cost functions of the problem
are affine. In~\cite{cen:inria-00579668}, the authors further argue
that --- since Danskin's Theorem tells us that the value function
is locally Lipschitz continuous --- the value function is differentiable
almost everywhere by Rademacher's Theorem.
A similar conclusion is drawn in~\cite{tercca2020envelope},
and formulas to compute the gradient at points where the value
function is differentiable are given in both references.
However, employing smooth optimization methods to
minimize nondifferentiable functions can yield suboptimal solutions,
even in the convex case, as illustrated by the example discussed in \cite[\S8.1.2]{beck2017first}.

Thus, our work differs from the above references in at least two points: $(i)$
we study the existence and provide formulas for the gradient of a parametric
value function, whereas previous works concentrate on directional derivatives;
$(ii)$ we consider convex nonlinear stage costs and constraints, going beyond
the usual linear multistage stochastic programming framework.  Also, to cover
cases where the parametric value functions are convex with respect to their
parameter argument but nondifferentiable, we propose a regularization method
based on the Moreau-Yosida envelope \cite{moreau1965proximite,
  yosida1971functional}.  We study the convergence properties of both the
regularized value functions that we introduce, and of the parameter solutions of
a PMSOP, as our regularization coefficient tends to zero.  Although
Moreau-Yosida regularization has been previously employed in optimal control
problems (see e.g. \cite{bergounioux1998comparison}), few anterior studies
concentrate on stochastic problems, except recently in~\cite{ortega2021moreau},
where the authors examine the case of a discounted infinite horizon Markov
decision process.  We share common interests with the latter reference, but our
finite horizon context leads us to follow a different path. Finally, we propose
an alternative method (based on SDDP with an extended state) and an assessment
technique (based on SDDP with the original state) to evaluate the quality of a
parameter as a solution to a PMSOP. For both purposes, we rely on the stochastic
dual dynamic programming algorithm (SDDP)
\cite{pereira1991multi,shapiro2011analysis,%
  philpott2008convergence, girardeau2015convergence}.

The paper is organized as follows.
First, in Sect.~\ref{sec:parametric_multistage_problem},
we introduce the definition of a PMSOP
and of parametric value functions.
Second, in Sect.~\ref{sec:differentiable_convex_value_functions},
we provide conditions to obtain Bellman-like equations for
the gradient of differentiable convex value functions
with respect to their parameter argument.
To extend the method to convex nondifferentiable parametric value
functions, we propose a regularization scheme and we study
the convergence properties of resulting regularized parametric
value functions.
From these theoretical results, we deduce,
in Sect.~\ref{sec:experimental_assessment_based_on_SDDP},
a first-order optimization method based on recursive gradient computation
to solve PMSOPs.
Finally, in~Sect.~\ref{sec:day_ahead_problem},
we present a numerical test case inspired
from day-ahead power scheduling. 

\subsubsection*{Background Notions and Notations}


\textit{Natural numbers.}
We use the notation \(
\ic{i,j}=\na{i,i+1,\ldots,j-1,j}
\) for any pair of natural numbers such that \( i \leq j \).

\noindent
\textit{Probability.}
Let $(\Omega, \mathcal{F}, \PP)$ be a probability space.
We use bold capital letters, e.g. $\va{Z}$,
to denote random variables,
and denote by $\sigma\np{\va{Z}}$ the $\sigma$-algebra
on~$\Omega$ ($\sigma\np{\va{Z}} \subset \mathcal{F}$)
generated by the random variable~$\va{Z}$
and,  when the random variable~$\va{Z}$ takes a finite number of values,
by $\Support{\va{Z}}$ the support of~$\va{Z}$, that is,
the set of possible values of~$\va{Z}$ with positive probabilty.
Besides, for a topological space $\XX$,
we denote by $\Borel(\XX)$ its Borel $\sigma$-field.

\noindent
\textit{Functional analysis and topology.}
We introduce the extended real line $\barRR = [-\infty, +\infty]$,
and we denote $\pRR = [0, +\infty[$ and $\spRR = \ ]0, +\infty[$.
Let $f : \XX \to \barRR$ be a function.
The effective domain $\dom f$
is the set $\defset{x \in \XX}{f(x)<+\infty}$,
and the function~$f$ is said to be proper
if $f > -\infty$ and $\dom f \neq \emptyset$.
For any subset \( X \subseteq \XX \),
$\indicator_{X} : \XX \to \barRR $ denotes the indicator
function of the set~$X$:
\( \indicator_{X}\np{x} = 0 \) if \( x \in X \),
and \( \indicator_{X}\np{x} = +\infty \)
if \( x \not\in X \).
For a topological space $\XX$ and \( X \subseteq \XX \),
we recall that the interior $\interior X$ is defined as
the largest open set contained in $X$.

\section{Parametric Multistage Stochastic Optimization Problems}
\label{sec:parametric_multistage_problem}

In~\S\ref{sec:problem_formulation}, we introduce a standard formulation
for PMSOPs.  Then, in~\S\ref{sec:parametric_value_functions},
we introduce parametric value functions defined by the Bellman
equations.
Finally, we discuss the specific role of the parameter in~\S\ref{sec:role_of_parameter}.

\subsection{Problem Formulation}
\label{sec:problem_formulation}

We are interested in solving problems of the form
\begin{subequations}
  \label{eq:parametric_multistage_problem}
  \begin{equation}
    \Min_{\param \in \ParamAd} \Phi(\param) \eqfinv
    \label{eq:upstream_problem}
  \end{equation}
  in the case where the objective function
  $\Phi$ in~\eqref{eq:upstream_problem}
  is the value of the following \emph{parametric multistage stochastic
    optimization problem} (PMSOP):
  \begin{align}
    \Phi(\param) =
    & \inf_{\rvControl_0, \ldots, \rvControl_{\horizon-1}}
      \Besp{\sum_{t=0}^{\horizon-1} \ \coutint_t(\rvState_t, \rvControl_t, \rvUncertain_{t+1}, \param)
      + \Final(\rvState_\horizon, \param)} \eqfinv
      \label{eq:multistage_criterion} \\[0.2cm]
    & \rvState_{0} = \state_{0} \eqfinv
      \label{eq:constraints_begin}
    \\
    & \rvState_{t+1} = \dynamics_t(\rvState_{t}, \rvControl_t,
      \rvUncertain_{t+1})
      \eqsepv \forall t \in \ic{0, \horizon-1} \eqfinv
      \label{eq:state_transition}
    \\
    & \sigma(\rvControl_t) \subseteq \sigma(\rvUncertain_1, \ldots, \rvUncertain_t) \eqsepv
      \forall t \in \ic{0, \horizon-1}
      \eqfinp \label{eq:constraints_end}
  \end{align}
\end{subequations}

We now comment on all terms in Problem~\eqref{eq:parametric_multistage_problem},
and we discuss 
assumptions ensuring that the expected value
in~\eqref{eq:multistage_criterion} is well defined.  We consider a
\emph{discrete time span}
\begin{equation}
  \ic{0, \horizon} = \na{0, 1, \ldots, \horizon-1, \horizon} \eqfinv
\end{equation}
with \emph{horizon} a natural number~$\horizon \in \NN^*$.
\begin{subequations}
Concerning the \emph{upstream problem}~\eqref{eq:upstream_problem},
the variable
\begin{equation}
  \label{eq:parameter}
  \param \in \PARAM \eqfinv
\end{equation}
where $\nparam \in \NN^*$, is a \emph{parameter} which
may be chosen in the \emph{parameter set}
\begin{equation}
\label{eq:parameter_constraint_set}
  \ParamAd \subseteq \PARAM \eqfinp
\end{equation}
The parameter~$\param$, as well as the control~$\rvControl_0$, are both
(constant) decisions made at the initial time~$t=0$.  However, the
parameter~$\param$ and the control~$\rvControl_0$ play different roles.  This
point is discussed in~\S\ref{sec:role_of_parameter}.
\end{subequations}

Concerning the \emph{downstream problem}~\eqref{eq:multistage_criterion}--\eqref{eq:constraints_end},
we introduce random variables
\begin{subequations}
  \begin{align}
    \rvState_t : \np{\Omega, \mathcal{F}, \PP}
    &\rightarrow \bp{\STATE, \Borel(\STATE)} \eqsepv \forall t \in \ic{0, \horizon} \eqfinv
      \label{eq:rvState} \\
    \rvControl_t : \np{\Omega, \mathcal{F}, \PP}
    &\rightarrow \bp{\CONTROL, \Borel(\CONTROL)} \eqsepv \forall t \in \ic{0, \horizon-1} \eqfinv
      \label{eq:rvControl} \\
    \rvUncertain_t : \np{\Omega, \mathcal{F}, \PP}
    &\rightarrow \bp{\UNCERTAIN, \Borel(\UNCERTAIN)} \eqsepv \forall t \in \ic{1, \horizon} \eqfinv \label{eq:noise_process}%
  \end{align}
\end{subequations}
which denote respectively the \emph{state}, \emph{control} and \emph{noise}
variables of Problem~\eqref{eq:multistage_criterion}--\eqref{eq:constraints_end},
taking values in real Euclidean spaces of respective finite dimensions
$\np{\nstate, \ncontrol, \nuncertain} \in {\NN^*}^3$.
The state variables are initialized by $\state_0 \in \STATE$
and evolve in~\eqref{eq:state_transition} according to the 
\emph{dynamics}
\begin{equation}
  \dynamics_t : \STATE \times \CONTROL \times \UNCERTAIN \rightarrow \STATE \eqsepv \forall t \in \ic{0,\horizon-1} \eqfinp
  \label{eq:dynamics_ch2}
\end{equation}
Note that the parameter~$\param$ does not affect
the dynamics~$\dynamics_{t}$, and that the
constraints~\eqref{eq:constraints_begin}--\eqref{eq:state_transition}
are almost sure (a.s.) constraints.
The control variables are constrained by the \emph{nonanticipativity}
constraints~\eqref{eq:constraints_end}.
Lastly, the criterion to be minimized in~\eqref{eq:multistage_criterion}
is the expected value of the sum of the
\emph{parametric stage costs}
\begin{subequations}
  \label{eq:cost_functions}
  \begin{align}
    \coutint_t : \STATE \times \CONTROL \times \UNCERTAIN \times \PARAMslice
    &\rightarrow \Val \eqsepv \forall t \in \ic{0, \horizon-1} \eqfinv
      \label{eq:parametric_stage_cost}%
      \intertext{with a 
\emph{parametric final cost}}
      \Final : \STATE \times \PARAMslice
    &\rightarrow \Val \eqfinp
      \label{eq:parametric_final_cost}
  \end{align}
Notice that, whereas~$\control_0$ only appears at time~$t=0$
  in the initial cost function~\eqref{eq:cost_functions} ---
  as \( \coutint_0\np{\state_0,\control_0,\uncertain_1,\param} \) ---
$\param$ appears in the cost functions~\eqref{eq:cost_functions}
at all times. This point is discussed in~\S\ref{sec:role_of_parameter}.
\end{subequations}

\begin{remark}
\label{rm:explicit_constraints}
The cost functions can take values in $\Val$ to offer the
possibility to implicitly encode constraints through effective domains.
In practice, explicit constraints of the form $\rvControl_t \in \ConstraintSet_t\np{\rvState_t, \param}$ a.s. --- with
set-valued mappings $\ConstraintSet_t: \STATE \times \PARAMslice
\rightrightarrows \CONTROL$, for all $t \in \ic{0, T{-}1}$ ---
can be added in Problem~\eqref{eq:parametric_multistage_problem}
(see the example in Sect.~\ref{sec:day_ahead_problem}).
\end{remark}

In what follows, we provide assumptions which ensure that
the mathematical expectation~\eqref{eq:multistage_criterion} is well defined.
For this purpose, we have considered proper cost functions --- that never take the value $-\infty$ ---
and we will consider discrete noise variables~\eqref{eq:noise_process}
--- so that the expectation~\eqref{eq:multistage_criterion} is well defined.
Indeed, when noise variables~\eqref{eq:noise_process} take a finite number of
values, then so do the control variables~\eqref{eq:rvControl}
by~\eqref{eq:constraints_end} --- because, by Doob Theorem
(see \cite[Chap.~1, p.~18]{Dellacherie-Meyer:1975}), the random variable~$\rvControl_t$ is
a function of the random variables \( \rvUncertain_1, \ldots, \rvUncertain_t \).
Then all the state variables~\eqref{eq:rvState} also take a finite number of
values by the dynamics~\eqref{eq:state_transition}.
Finally, the mathematical expectation~\eqref{eq:multistage_criterion} reduces to
a finite sum of numbers that belong to~$\Val$, hence is well defined as an
element of~$\Val$.
Questioning whether our results extend to continuous noise variables
could be the subject of a following research work.

\begin{remark}
  The formulation of the nonanticipativity constraint
  in~\eqref{eq:constraints_end} corresponds to problems which formulate
  naturally in the \emph{decision-hazard} information structure \cite[p.~8]{Carpentier-Chancelier-Cohen-DeLara:2015}.
  In particular, the first decision~$\rvControl_0$ is deterministic,
  with $\sigma(\rvControl_0) = \na{\emptyset, \Omega}$.
\end{remark}

\subsection{Parametric Value Functions}
\label{sec:parametric_value_functions}



As mentioned in~\S\ref{sec:problem_formulation},
we consider discrete random variables~\eqref{eq:noise_process}
in Problem~\eqref{eq:parametric_multistage_problem}.
Besides, we make the following (discrete) white noise assumption.

\begin{assumption}[discrete white noise]
  \label{as:discrete_white_noise}
  The sequence $\sequence{\rvUncertain_t}{t \in \ic{1,\horizon}}$ of noise variables
  in~\eqref{eq:noise_process} is stagewise independent,
  and each noise variable $\rvUncertain_t$ has a finite support.
\end{assumption}
The above assumption has a direct consequence on the solutions
of the multistage Problem~\eqref{eq:multistage_criterion}-\eqref{eq:constraints_end}.
Indeed, if we consider a fixed value
of the parameter $\param \in \PARAM$ in~\eqref{eq:parameter},
we retrieve a standard \emph{multistage stochastic optimization problem}.
Therefore,
Stochastic Dynamic Programming
gives us a method for computing the optimal solution of the multistage
Problem~\eqref{eq:multistage_criterion}-\eqref{eq:constraints_end},
and thus to evaluate $\Phi(\param)$ in~\eqref{eq:upstream_problem}.

Under finite support of the noises in Assumption~\ref{as:discrete_white_noise},
each of the random variables \( \rvUncertain_1, \ldots, \rvUncertain_\horizon \)
takes a finite number of values, so that the following
\emph{Bellman equations}
\begin{subequations}
  \begin{align}
    \ValueFunction_\horizon(\state \sep \param)
    &= \Final(\state, \param) \eqsepv
      \forall (\state, \param) \in \STATE \times \PARAM \eqfinv
    \\
    \ValueFunction_t(\state \sep \param)
    &= \inf_{\control \in \CONTROL}\EE
      \Bc{\coutint_t(\state, \control, \rvUncertain_{t+1}, \param) + \ValueFunction_{t+1}
      \bp{\dynamics_t\np{\state, \control, \rvUncertain_{t+1}} \sep \param}}
      \eqsepv \label{eq:marginal_parametric_value_functions}\\
    &\hspace{3cm}
      \forall (\state, \param) \in \STATE \times \PARAM \eqsepv \forall t \in \ic{0, \horizon-1}
\notag
  \end{align}
  \label{eq:parametric_value_functions}%
\end{subequations}
are well defined and yield, by backward induction,
the sequence~$\sequence{\ValueFunction_t}{t \in \ic{0, \horizon}}$
of \emph{parametric value functions}.
%
Under stagewise independence of the noises in
Assumption~\ref{as:discrete_white_noise}, the value functions
$\sequence{\ValueFunction_t}{t \in \ic{0, \horizon}}$
in~\eqref{eq:parametric_value_functions} give the optimal value of the
multistage Problem~\eqref{eq:multistage_criterion}-\eqref{eq:constraints_end},
in the sense that
\begin{equation}
  \Phi(\param) = \ValueFunction_0(\state_{0} \sep \param) \eqsepv
  \forall \param \in \PARAM \eqfinp
  \label{eq:Phi_value_function}
\end{equation}
We refer to Bertsekas~\cite{bertsekas1995dynamic} and Puterman~\cite{puterman94}
for a comprehensive presentation of the Stochastic Dynamic Programming method.
\begin{remark}
  Although the parametric value functions
  $\sequence{\ValueFunction_t}{t \in \ic{0, \horizon}}$
  in~\eqref{eq:parametric_value_functions} take the parameter $\param$ as an
  argument, we use a semicolon ``;'' to isolate it from the state variable
  $\state$.  This notation is used to emphasize on the special role of the
  parameter in the PMSOP framework, as discussed
  in~\S\ref{sec:role_of_parameter}.
\end{remark}

Since parametric value functions are defined as
the infimum of a certain criterion in the Bellman equations,
they are \say{marginal functions},
a class of functions with rich properties \cite{bonnans2013perturbation}.
To ease applications
to our context, we introduce the (parametric) \emph{$Q$-functions}
\begin{subequations}
	\label{eq:Q_function}
	\begin{align}
	\QFunction_t(\state, \control \sep \param)
	&=
	\EE
	\Bc{\coutint_t(\state, \control, \rvUncertain_{t+1}, \param) + \ValueFunction_{t+1}
		\bp{\dynamics_t\np{\state, \control, \rvUncertain_{t+1}} \sep \param}}
	\eqfinv \label{eq:Q_function_expression} \\
	& \hspace{1cm}
	\forall (\state, \control, \param) \in
	\STATE \times \CONTROL \times \PARAM \eqsepv
	\forall t \in \ic{0, \horizon-1}
	\eqfinv \notag
	\intertext{so that the parametric value functions
		$\sequence{\ValueFunction_t}{t \in \ic{0, \horizon-1}}$ in~\eqref{eq:parametric_value_functions}
		formulate explicitly as marginal functions:}
	\ValueFunction_{t}(\state \sep \param)
	&= \inf_{\control \in \CONTROL}
	\QFunction_t(\state, \control \sep \param) \eqsepv
	\forall (\state, \param) \in
	\STATE \times \PARAM \eqsepv
	\forall t \in \ic{0, \horizon-1}
	\eqfinp \label{eq:marginal_value_function}
	\end{align}	
\end{subequations}
For the same reason, we also introduce the (possibly empty)
parametric \emph{solution sets}
\begin{align}
\SolutionSet_t(\state, \param) &=
\argmin_{\control \in \CONTROL}
\QFunction_t(\state, \control \sep \param) \eqsepv
\forall (\state, \param) \in
\STATE \times \PARAM \eqsepv
\forall t \in \ic{0, \horizon-1}
\eqfinp \label{eq:solution_sets}
\end{align}

\subsection{The Role of the Parameter}
\label{sec:role_of_parameter}

The parameter~$\param$, as well as the control~$\control_0$,
are both (constant) decisions made at the initial time~$t=0$.
However, whereas~$\control_0$ only appears at time~$t=0$ in the initial cost function~\eqref{eq:cost_functions},
$\param$ appears in the cost functions~\eqref{eq:cost_functions}
at all times. This is why --- in a stochastic control formulation amenable
to dynamic programming --- the parameter~$p$ cannot be considered
as an initial control variable.

By contrast, it is clear from the definitions of
Problem~\eqref{eq:parametric_multistage_problem} and of the parametric value
functions $\sequence{\ValueFunction_t}{t \in \ic{0, \horizon-1}}$
in~\eqref{eq:parametric_value_functions} that the parameter $\param$
in~\eqref{eq:parameter} could be treated as a state, by introducing a new state
variable as $\statebis_t = \np{\state_t, y_t}$ for $t \in \ic{0, \horizon}$
together with a trivial --- stationary --- dynamics for its component $y$, so that
\begin{equation}
  \statebis_0 = \np{\state_0, \param} \text{ and } \quad
  \statebis_{t+1} =\np{\state_{t+1}, y_{t+1}}=
  \bp{\dynamics_t(\state_t, \control_t, \uncertain_{t+1}), y_t}
  \eqsepv \forall t \in \ic{0, \horizon-1}
  \eqfinp
  \label{eq:extended_dynamics}
\end{equation}
We have chosen not to follow that path, bearing in mind the exponential growth
of Stochastic Dynamic Programming's complexity with respect to the dimension of
the state space (termed curse of dimensionality: see \cite{bellman1957dp}).

Thus, we treat the parameter $\param$ apart from the state variables.  When the
value of $\param$ is fixed, we have seen
in~\S\ref{sec:parametric_value_functions} that we can compute
$\Phi(\param) = \ValueFunction_0(\state_0 \sep \param)$ from the knowledge of the
state functions
$\ValueFunction_1(\cdot \sep \param), \ldots, \ValueFunction_\horizon(\cdot \sep \param)$
with the Bellman induction~\eqref{eq:parametric_value_functions}.  In the
approach that we propose, we will show that, in an analogous manner, we can
compute
$\nabla\Phi(\param) = \nabla_\param\ValueFunction_0(\state_0 \sep \param)$ from the knowledge
of the state mappings
$\nabla_\param\ValueFunction_1(\cdot \sep \param), \ldots, \nabla_\param\ValueFunction_\horizon(\cdot
\sep \param)$ with a Bellman-like induction, in the spirit
of~\eqref{eq:parametric_value_functions}.
This allows us to build an efficient first-order oracle for the objective
function $\Phi$, defined as a mapping
\begin{equation}
  \param \mapsto \np{\Phi(p), \nabla\Phi(p)} \eqfinp
  \label{eq:oracle}
\end{equation}
This mapping returns the value of the objective function $\Phi$ together with the
value of the gradient of the objective (when it exists) to apply iterative
optimization steps for solving the
PMSOP~\eqref{eq:parametric_multistage_problem}.  The theoretical backbone of
such an oracle --- conditions for the differentiability of $\Phi$, formulas to
compute $\nabla \Phi$ --- is the topic of
Sect.~\ref{sec:differentiable_convex_value_functions}.

For the sake of comparison, we also consider
in~\S\ref{sec:first_order_methods_SDDP} an alternative first-order oracle based
on Stochastic Dual Dynamic Programming (SDDP, \cite{pereira1991multi}).  Here,
the state extension~\eqref{eq:extended_dynamics}, namely
$\statebis_t = \np{\state_t, y_t}= \np{\state_t, \param}$ for
$t \in \ic{0, \horizon}$, is necessary for SDDP to compute subgradients in the
extended dual space $\STATE \times \PARAM$ which, in turn, can serve to compute a
subgradient of~$\Phi$.

We expect the computational burden of both approaches --- treating $\param$ apart
from the state or not --- to be dependent on the data of the PMSOP.  An
illustrative numerical test case in day-ahead power scheduling is presented
in~Sect.~\ref{sec:day_ahead_problem}.

\section{Gradient of a Convex Parametric Value Function}
\label{sec:differentiable_convex_value_functions}

This section is organized as follows.
\begin{itemize}
\item First, in \S\ref{sec:marginal_functions},
  we review the differentiability properties
  that can be preserved by the marginal operation
  in the recursive definition of parametric value functions
  in~\eqref{eq:marginal_parametric_value_functions}.
  Indeed, we recall that differentiability
  properties of the objective function
  $\Phi$ in~\eqref{eq:parametric_multistage_problem}
  are possibly inherited from those of the parametric value functions
  $\na{\ValueFunction_t}_{t \in \ic{0, \horizon}}$
  defined by the Bellman equation in~\eqref{eq:parametric_value_functions}
  through $\Phi = \ValueFunction_{0}(\state_{0} \sep \cdot)$
  in~\eqref{eq:Phi_value_function}, under the discrete white noise
  Assumption~\ref{as:discrete_white_noise}.
\item Second, in \S\ref{sec:smooth_convex_problems},
  we show that, under suitable assumptions,
  parametric differentiability properties
  are preserved by the Bellman backward induction
  (Theorem~\ref{th:differentiability}),
  and we give formulas to compute
  the sequence of gradients
  $\nabla_\param \ValueFunction_\horizon, \ldots, \nabla_\param \ValueFunction_t,
  \ldots, \nabla_\param \ValueFunction_0$
  for a PMSOP (Theorem~\ref{th:smooth_convex_problem}).
  Reverting to Problem~\eqref{eq:parametric_multistage_problem}, the effective
  domain of the function~$\Phi$ in~\eqref{eq:parametric_multistage_problem},
  denoted by~${\cal P}$, is derived from the data of the
  PMSOP~\eqref{eq:multistage_criterion}--\eqref{eq:constraints_end} and the
  function $\Phi$ is shown to be differentiable on~$\interior\Param$.  It follows
  that, for a closed convex subset $\ParamAd \subset \interior\Param$,
  Problem~\eqref{eq:parametric_multistage_problem} is a convex differentiable
  optimization problem.
\item Third, in \S\ref{sec:lower_smooth}, we drop the differentiability
  assumptions that we made on the data of the
  PMSOP~\eqref{eq:multistage_criterion}--\eqref{eq:constraints_end}.  By a
  regularization procedure based on the Moreau envelope, we build a convex
  differentiable PMSOP over~$\PARAM$ which approximates
  Problem~\eqref{eq:multistage_criterion}--\eqref{eq:constraints_end}.  Thus,
  using the same steps as in \S\ref{sec:smooth_convex_problems}, we obtain that,
  for any closed convex subset $\ParamAd \subset \PARAM$, our regularized
  approximation of Problem~\eqref{eq:parametric_multistage_problem} is a convex
  differentiable optimization problem.
\item Lastly, in~\S\ref{sec:convergence_properties}, we show that our
  regularized approximation of Problem~\eqref{eq:parametric_multistage_problem}
  provides lower bounds converging to the value of the original problem when the
  regularization coefficient converges to zero.
\end{itemize}

\subsection{Parametric Differentiability of Marginal Functions}
\label{sec:marginal_functions}

The difficulty in studying differentiability
properties of the functions $\na{\ValueFunction_t}_{t \in \ic{0, \horizon}}$
in~\eqref{eq:parametric_value_functions}
with respect to the parameter $\param$ arises from
\begin{itemize}
\item[$(i)$] the recursive structure of
  the Bellman equations with respect to
  the time step $t$ in~\eqref{eq:parametric_value_functions},
\item[$(ii)$] the marginal operation
  with respect to the control $\control$
  in~\eqref{eq:Q_function}.
\end{itemize}

In order to address the recursion $(i)$, we adopt a standard proof scheme in
stochastic dynamic programming, by showing in \S\ref{sec:smooth_convex_problems}
that some differentiability properties are preserved by the Bellman backward
recursion~\eqref{eq:parametric_value_functions}.

Yet, before moving to the dynamic programming principle,
we need to identify what differentiability properties
can be back-propagated from
$\ValueFunction_\horizon$ to $\ValueFunction_0$.
We outline the difficulties encountered in $(ii)$
by considering a static version of the marginal operation in~\eqref{eq:Q_function}:
let the parametric marginal function
\begin{equation}
  V(\state \sep \param) = \inf_{\control \in \CONTROL}
  Q(\state, \control \sep \param) \eqfinv
  \label{eq:marginal}%
\end{equation}
be defined after a function
$Q : \STATE \times \CONTROL \times \PARAM \to \Val$.
We review some results on the directional differentiability, the subdifferentiability,
and the (Fr\'echet and G\^ateaux) differentiability of the marginal
function $V$ with respect to the parameter $\param$.

\paragraph{Directional differentiability.} A standard approach is to refer to
Danskin's Theorem to relate, under regularity assumptions, the directional
derivatives with respect to $p$ of the functions $V$ and $Q$. In the context of
multistage linear stochastic programming, this idea is exploited in
\cite{cen:inria-00579668, tercca2020envelope, guigues2023duality}.

These references mostly address the sensitivity analysis of $\Phi$
in~\eqref{eq:parametric_multistage_problem} with respect to the parameter, for
which directional derivatives seem to be appropriate.
We also report that, in~\cite[Corollary 4.4]{cen:inria-00579668},
the authors deduce from Danskin's Theorem
that the function $\Phi$ is
(Fr\'echet) differentiable
almost everywhere over the parameter space $\PARAM$.
However, in our case,
we would like to minimize $\Phi$ over
the compact set $\ParamAd$ in~\eqref{eq:parameter_constraint_set}
with a first-order method \cite{beck2017first},
which typically
requires $\Phi$ to be (sub)differentiable
everywhere over an open set $\Param \subseteq \PARAM$ containing $\ParamAd$.

\paragraph{Subdifferentiability.}
When $Q$ is a proper, convex and lower semicontinuous
function,
further results characterizing of the subdifferential of $V$ are known: for
instance, when one exists, a subgradient of $V$ at
$(\bar{\state}, \bar{\param}) \in \STATE \times \PARAM$ can be derived from a dual
solution of
\begin{equation}
  \min_{(\state, \control, \param)
    \in \dom Q }
  Q(\state, \control \sep \param) \eqsepv
  \text{s.t. } (\state, \param) = (\bar{\state}, \bar{\param}) \eqfinv
  \label{eq:primal-dual}%
\end{equation}
as documented e.g. in \cite{guigues2020inexact}, where the computation of
approximate subgradients from inexact primal-dual solutions
of~\eqref{eq:primal-dual} is also discussed.
Although used extensively in the SDDP literature \cite{pereira1991multi,
  shapiro2011analysis}, this approach requires to solve~\eqref{eq:primal-dual}
over the primal space $\STATE \times \CONTROL \times \PARAM$: when the parameter can be
treated separately, we wish to avoid this strategy as it increases the computing
costs for solving~\eqref{eq:primal-dual}.

Moreover, it would be nice that, knowing
a subgradient of the function~$Q$, we could deduce
a subgradient for the function~$V$
--- as we appeal to recursive formulas, see $(i)$.
Unfortunately, subdifferentials
do not quite meet this requirement:
given a solution $\control\opt \in \CONTROL$
of~\eqref{eq:marginal}
at $(\bar{\state}, \bar{\param}) \in \STATE \times \PARAM$,
we only have the following inclusions
\begin{equation}
  \partial_\param V(\bar{\state} \sep \bar{\param})
  \subseteq \proj{\PARAM}{\partial_{(\control, \param)} Q (\bar{\state}, \control\opt \sep \bar{\param})}
  \subseteq \partial_\param Q(\bar{\state}, \control\opt \sep \bar{\param})
\end{equation}
between subdifferentials
and, in general, these inclusions can be strict
(see \cite[Corollary 2.3.6, Examples 2.3.7 and 2.3.8]{le2021subdifferentiability}).
Even if a full characterization of the subdifferential
$\partial_\param V (\bar{\state} \sep \bar{\param})$
can be found in~\cite[Lemma 2.1]{guigues2016convergence}
under strong regularity conditions on~$Q$,
it is unfortunate that these regularity conditions
are not preserved by the marginal function~$V$ in~\eqref{eq:marginal}, so that it cannot be used to characterize the
sequence of subdifferentials
$\partial_\param \ValueFunction_\horizon, \ldots, \partial_\param \ValueFunction_t,
\ldots, \partial_\param \ValueFunction_0$.

\paragraph{Differentiability.}
The case of differentiable convex functions
is much more favorable. Formally, we concentrate
on the following classes of functions.

\begin{definition}[$\Gamma$]
  \label{de:Gamma}
  For an Euclidean space~$\YY$,
  we denote by $\Gamma\nc{\YY,\PARAM}$ the set of lower semicontinuous (lsc) convex
  functions $\gamma : \YY \times \PARAM \to \Val$.
\end{definition}

\begin{definition}[$\Theta$]
  \label{de:Theta}
  For an Euclidean space~$\YY$ and a set~$\Param \subset \PARAM$,
  we denote by $\Theta\nc{\YY,\Param}$ the subset of
  functions $\theta$ in $\Gamma\nc{\YY,\PARAM}$
  satisfying
  \begin{enumerate}
  \item
    the effective domain of $\theta$ is a (possibly empty) product set:
    $\dom \theta = \Pairxu_\theta \times \Param \subset \YY \times \PARAM$,
  \item
    for all $y \in \Pairxu_\theta$, the function
    $\theta(y, \cdot)$ is differentiable on~$\interior\Param$.
  \end{enumerate}
\end{definition}

We recall that, in the context of Definition~\ref{de:Theta}, the notions of
Fr\'{e}chet and G\^{a}teaux differentiability coincide \cite[Corollary
17.44]{Bauschke-Combettes:2017}.  To handle marginal functions as
in~\eqref{eq:marginal}, we are interested in compacity properties when
minimizing with respect to controls $\control \in \CONTROL$.

\begin{definition}[$\Gamma_\mathcal{K}$, $\Theta_\mathcal{K}$, compacity]
  \label{de:compacity}
  We denote by $\Gamma_\mathcal{K}\nc{\STATE\times\CONTROL,\PARAM}$
  the subset of functions $\gamma$ in $\Gamma\nc{\STATE\times\CONTROL,\PARAM}$
  for which there exists a compact set $\mathcal{K}_\gamma \subset \CONTROL$
  such that
  \begin{subequations}
    \begin{equation}
      \forall  (\state, \param) \in \STATE{\times}\PARAM \eqsepv
      \dom \gamma(x,\cdot, p) \subset \mathcal{K}_\gamma
      \eqfinp
    \end{equation}
    We also introduce the set
    \begin{equation}
      \Theta_\mathcal{K}\nc{\STATE\times\CONTROL,\Param}
      = \Theta\nc{\STATE\times\CONTROL,\Param} \cap \Gamma_\mathcal{K}\nc{\STATE\times\CONTROL,\PARAM}
      \eqfinv
    \end{equation}
  \end{subequations}
  for a given set $\Param \subset \PARAM$.
\end{definition}

Having introduced appropriated sets of functions, we now turn to appraise how
Stochastic Dynamic Programming maps functions from one set to another.  In
particular, regarding our discussion on the marginal operation
in~\eqref{eq:marginal}, we will see that, if $Q$ belongs to
$\Theta_\mathcal{K}\nc{\STATE\times\CONTROL,\Param}$, then $V$ belongs to
$\Theta \nc{\STATE,\Param}$ --- see Theorem~\ref{th:differentiability} and its proof
summary in Figure~\ref{fig:notations}.

\subsection{Differentiability
  and Gradient Computation by Backward Induction}
\label{sec:smooth_convex_problems}

We provide assumptions on the data of  Problem~\eqref{eq:parametric_multistage_problem}
to enforce the differentiability of the corresponding value function~$\Phi$,
from the differentiability of the parametric value functions
$\na{\ValueFunction_t}_{t \in \ic{0, \horizon}}$
in~\eqref{eq:parametric_value_functions}.
In our main result, we also
introduce a Bellman-like backward recursion
to compute the gradients
$\na{\nabla_\param \ValueFunction_t}_{t \in \ic{0, \horizon}}$,
and thus the gradient $\nabla \Phi$ of the objective function~$\Phi$ defined in Equation~\eqref{eq:upstream_problem}.

\subsubsubsection{Parametric Differentiability is Preserved
  by the Bellman Equation}

In the next theorem, we show that the set~$\Theta\nc{\STATE,\Param}$
of functions in Definition~\ref{de:Gamma} is stable by
the Bellman operator, and we give a formula to propagate gradients.

\begin{theorem}
  \label{th:differentiability}
  Let $ \va{\Uncertain}$ be a random variable taking a finite number of values.
  Given a function $\coutint : \STATE \times \CONTROL
  \times \UNCERTAIN \times \PARAM \to \Val$
  and a mapping
  $\dynamics : \STATE \times \CONTROL \times \UNCERTAIN \to \STATE$,
  we consider the Bellman operator~$\mathcal{B}$ defined, for any
  function~$\varphi : \STATE \times \PARAM \to \Val$, by
  \begin{align}
    {\cal B}\nc{\varphi} (\state,\param) =
    \inf_{\control\in\CONTROL}
    \Besp{\coutint(\state,\control,\va{\Uncertain},\param)
    + \varphi
    \bp{\dynamics(\state,\control,\va{\Uncertain}), \param}}
    \eqsepv
    \forall (\state, \param) \in \STATE\times \PARAM
    \eqfinp
    \label{eq:Bellman_Operator}%
  \end{align}
  Assume that the mapping
  $\dynamics$ is affine in~$\np{\state,\control}$.
  For any $\uncertain$ in the (finite) support of the random variable~$\va{\Uncertain}$,
  we denote $\coutint_{\uncertain}=L(\cdot,\cdot,\uncertain,\cdot)$.
  We have that,
  \begin{enumerate}
  \item if
  $\coutint_{\uncertain} \in
  \Gamma_\mathcal{K}\nc{\STATE \times \CONTROL,\PARAM}$
  for all $\uncertain \in \Support{\rvUncertain}$,
    then the set $\Gamma\nc{\STATE,\PARAM}$ is stable by the Bellman operator~$\mathcal{B}$,
  \item
    if,    for a given set $\Param \subset \PARAM$,
    $\coutint_{\uncertain} \in
    \Theta_\mathcal{K}\nc{\STATE \times \CONTROL,\Param}$
    for all $\uncertain \in \Support{\rvUncertain}$,
    then the set $\Theta\nc{\STATE,\Param}$ is stable by the Bellman operator~$\mathcal{B}$.
    Moreover, 
    for all $(\state, \param) \in \dom\bp{{\cal B}\nc{\varphi}}$
    with $\param\in\interior\Param$, the gradient of the function
    \( {\cal B}\nc{\varphi} (\state,\cdot) \) at $\param$
    is given by
    \begin{equation}
      \label{eq:Bellman_gradient}%
      \nabla_\param {\cal B}\nc{\varphi} (\state,\param) =
      \Besp{\nabla_\param\coutint(\state,\control\opt,\va{\Uncertain},\param)
        + \nabla_\param\varphi
        \bp{\dynamics(\state,\control\opt,\va{\Uncertain}), \param}}
      \eqfinv
    \end{equation}
    for any $\control\opt \in \CONTROL$
    in the nonempty argmin set of~\eqref{eq:Bellman_Operator}.
  \end{enumerate}
\end{theorem}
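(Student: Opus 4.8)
The plan is to regard $\mathcal{B}(\varphi)\np{\state,\cdot}$ as the partial minimization (marginal function) in the control of a jointly convex integrand, and to read off both conclusions from the calculus of marginal functions, the compactness hypothesis being exactly what makes lower semicontinuity and attainment work. Fixing $\varphi$ in the relevant class, I would set
\[ F\np{\state,\control,\param} = \Besp{\coutint\np{\state,\control,\rvUncertain,\param} + \varphi\bp{\dynamics\np{\state,\control,\rvUncertain},\param}} \eqfinv \]
which, under Assumption~\ref{as:discrete_white_noise}, is a finite positively weighted sum of the maps $\np{\state,\control,\param}\mapsto \coutint_\uncertain\np{\state,\control,\param}+\varphi\bp{\dynamics\np{\state,\control,\uncertain},\param}$ for $\uncertain\in\Support{\rvUncertain}$, so that $\mathcal{B}(\varphi)\np{\state,\param}=\inf_{\control\in\CONTROL}F\np{\state,\control,\param}$. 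Each summand is jointly convex: $\coutint_\uncertain$ is convex by hypothesis, and $\varphi\bp{\dynamics\np{\cdot,\cdot,\uncertain},\cdot}$ is the composition of the convex $\varphi$ with $\np{\state,\control,\param}\mapsto\bp{\dynamics\np{\state,\control,\uncertain},\param}$, which is affine because $\dynamics$ is affine in $\np{\state,\control}$ and independent of $\param$; each summand is likewise lsc. Hence $F$ is jointly lsc convex, and $\mathcal{B}(\varphi)$ is convex as a partial infimum of a jointly convex function.

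For part~1 it then remains to check that $\mathcal{B}(\varphi)$ is $\Val$-valued and lsc, and this is where the compactness set enters. For fixed $\np{\state,\param}$, the effective domain of $F\np{\state,\cdot,\param}$ is contained in $\mathcal{K}_{\coutint_\uncertain}$ for any single $\uncertain$, hence in a fixed compact set; since $F\np{\state,\cdot,\param}$ is lsc and never $-\infty$, its infimum over $\CONTROL$ is attained and finite whenever that domain is nonempty, and equals $+\infty$ otherwise. This yields both properness and the nonemptiness of the argmin invoked in part~2. For lower semicontinuity I would run the standard compactness argument: along a sequence $\np{\state_n,\param_n}\to\np{\state,\param}$ realizing $\liminf_n\mathcal{B}(\varphi)\np{\state_n,\param_n}$ (finite, else nothing to prove), I pick minimizers $\control_n$ in the fixed compact set, extract $\control_{n_k}\to\bar\control$, and use joint lower semicontinuity of $F$ to get $\liminf_k F\np{\state_{n_k},\control_{n_k},\param_{n_k}}\geq F\np{\state,\bar\control,\param}\geq\mathcal{B}(\varphi)\np{\state,\param}$. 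This gives $\mathcal{B}(\varphi)\in\Gamma\nc{\STATE,\PARAM}$.

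For part~2, since $\Theta_\mathcal{K}\nc{\STATE\times\CONTROL,\Param}\subset\Gamma_\mathcal{K}\nc{\STATE\times\CONTROL,\PARAM}$, part~1 already gives $\mathcal{B}(\varphi)\in\Gamma\nc{\STATE,\PARAM}$, so I only need the two extra $\Theta$ conditions. I would first verify the product-domain condition: because $\dom\coutint_\uncertain=\Pairxu_{\coutint_\uncertain}\times\Param$ and $\dom\varphi=\Pairxu_\varphi\times\Param$ are product sets, finiteness of $F\np{\state,\control,\param}$ amounts to $\param\in\Param$ together with constraints on $\np{\state,\control}$ alone (namely $\np{\state,\control}\in\Pairxu_{\coutint_\uncertain}$ and $\dynamics\np{\state,\control,\uncertain}\in\Pairxu_\varphi$ for all $\uncertain$), so $\dom F=\tilde{\Pairxu}\times\Param$; projecting out $\control$ shows $\dom\mathcal{B}(\varphi)=\Pairxu'\times\Param$ is again a product set.

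The crux will be the differentiability of $\mathcal{B}(\varphi)\np{\state,\cdot}$ on $\interior\Param$ together with formula~\eqref{eq:Bellman_gradient}, which I would obtain from the subdifferential of a marginal function. Fixing $\np{\state,\param}\in\dom\mathcal{B}(\varphi)$ with $\param\in\interior\Param$ and a minimizer $\control\opt$ (nonempty by the above), and writing $f=F\np{\state,\cdot,\cdot}$, $p=\mathcal{B}(\varphi)\np{\state,\cdot}$, I would use the elementary equivalence $v\in\partial p\np{\param}\Leftrightarrow\np{0,v}\in\partial f\np{\control\opt,\param}$, both directions being immediate from $p\np{\param}=f\np{\control\opt,\param}$ and $f\geq p$ on $\param$-slices. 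Restricting the supporting inequality to $\control=\control\opt$ gives $\partial p\np{\param}\subset\partial_\param f\np{\control\opt,\param}$; but $f\np{\control\opt,\cdot}$ is the finite sum of the maps $\coutint_\uncertain\np{\state,\control\opt,\cdot}$ and $\varphi\bp{\dynamics\np{\state,\control\opt,\uncertain},\cdot}$, each differentiable at $\param\in\interior\Param$ by membership in $\Theta_\mathcal{K}$ and $\Theta$, so $\partial_\param f\np{\control\opt,\param}=\na{\nabla_\param f\np{\control\opt,\param}}$ is a singleton. Since $\param\in\interior\Param=\interior\np{\dom p}$ and $p$ is proper convex, $\partial p\np{\param}\neq\emptyset$, so the inclusion equates a nonempty set with a singleton: $p$ is differentiable at $\param$ with $\nabla p\np{\param}=\nabla_\param f\np{\control\opt,\param}$, and differentiating the finite sum term by term yields~\eqref{eq:Bellman_gradient}. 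The main obstacle is precisely this last block --- upgrading from directional to full differentiability --- and it is the compactness-driven attainment of $\control\opt$ together with the product structure of the domain (which forces $f\np{\control\opt,\cdot}$ to be differentiable at $\param$) that lets the marginal subdifferential collapse to a single point.
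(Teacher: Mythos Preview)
Your proof is correct and follows essentially the same approach as the paper: define the joint integrand $F$ (the paper's $Q(\varphi)$), show it is jointly lsc convex with a product domain and compact $\control$-sections, deduce lsc and attainment for the marginal $\mathcal{B}(\varphi)$, and then collapse the marginal subdifferential to a singleton via the partial subdifferential $\partial_\param F(\state,\control\opt,\param)$. The only differences are presentational --- you give a direct sequential compactness argument for lower semicontinuity (the paper cites \cite[Lemma~1.30]{Bauschke-Combettes:2017}) and you verify the equivalence $v\in\partial p(\param)\Leftrightarrow(0,v)\in\partial f(\control\opt,\param)$ by hand (the paper cites \cite[Corollary~2.63]{Mordukhovich-2013}); the logical skeleton is the same.
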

\begin{proof}
  We first concentrate on the proof of claim~\textit{2.},
  and we will treat claim~\textit{1.} at the end.

  $(i)$ To begin with, for any function~$\varphi : \STATE\times\PARAM\to\Val$,
  we define
  \begin{align}
    \label{eq:defBtilde}
    \mathcal{Q}\nc{\varphi} : (\state,\control, \param) \mapsto
    \Besp{\coutint(\state,\control,\va{\Uncertain},\param)
    + \varphi
    \bp{\dynamics(\state,\control,\va{\Uncertain}), \param}}
    \eqfinv
  \end{align}
  --- that is, Equation~\eqref{eq:Bellman_Operator}
  before taking the infimum ---
  and, assuming that $\varphi \in \Theta\nc{\STATE,\Param}$ and
  $\na{\coutint_{\uncertain}}_{\uncertain \in \Support{\rvUncertain}}
  \subset \Theta_\mathcal{K}\nc{\STATE \times \CONTROL,\Param}$,
  we prove that the function $\mathcal{Q}\nc{\varphi}$
  belongs to $\Theta_\mathcal{K}\nc{\STATE\times\CONTROL,\Param}$.

  First, we consider the function $Q_{\coutint} =
  \Besp{\coutint(\cdot,\cdot,\va{\Uncertain},\cdot)} :
  \STATE\times\CONTROL\times \PARAM \to \Val $.
  As,  by assumption, $\coutint_{\uncertain} \in
  \Theta\nc{\STATE\times\CONTROL,\Param}$
  for all $\uncertain$
  in the support of~$\va{\Uncertain}$, we get that
  $Q_{\coutint}$ is convex lsc, with
  \( \dom \coutint_{\uncertain} = \Pairxu_{\coutint_\uncertain} \times \Param\) for some
  $\Pairxu_{\coutint_\uncertain}\subset\STATE\times\CONTROL$.
  We deduce that the effective domain of the function~$Q_{\coutint}$
  is given by
  \begin{equation*}
  	\dom Q_{\coutint} = \bigcap_{\uncertain\in\Support{\va{\Uncertain}}}
  	\dom Q_{\coutint_{\uncertain}} =
  	\Bp{  \bigcap_{\uncertain\in\Support{\va{\Uncertain}}}
  		\Pairxu_{\coutint_\uncertain}}\times \Param =
  	\Pairxu_{Q_{\coutint}} \times \Param
  	\eqfinv
  \end{equation*}
  with $\Pairxu_{Q_{\coutint}} =
  \bigcap_{\uncertain\in\Support{\va{\Uncertain}}} \Pairxu_{\coutint_\uncertain}$.
  For any~$(\state,\control)\in\Pairxu_{Q_{\coutint}}$,
  the differentiability
  of the function~$Q_{\coutint}(\state,\control,\cdot)$
  is a straightforward consequence of the assumption that
  $\coutint_{\uncertain} \in\Theta_\mathcal{K}\nc{\STATE\times\CONTROL,\Param}$
  for all $\uncertain\in\Support{\va{\Uncertain}}$.
  Indeed, the function~$Q_{\coutint}(\state,\control,\cdot)$ is
  a finite convex combination of the
  functions~$L_{\uncertain}(\state,\control,\cdot)$,
  that are differentiable in $\interior \Param$.
  We thus obtain that $Q_{\coutint} \in \Theta\nc{\STATE\times \CONTROL,\Param}$.
  Moreover, with the notations of Definition~\ref{de:compacity}, as $\coutint_{\uncertain} \in
  \Theta_\mathcal{K}\nc{\STATE\times\CONTROL,\Param}$ for all
  $\uncertain\in\Support{\va{\Uncertain}}$, we get that
  \begin{align*}
  	\nset{ \control \in  \CONTROL}{ \exists (\state,\control, \param)\in  \Pairxu_{Q_{\coutint}} \times \Param }
  	\subset& \bigcap_{\uncertain\in\Support{\va{\Uncertain}}}
  	\nset{ \control \in  \CONTROL}{ \exists (\state,\control, \param) \in
  		\Pairxu_{\coutint_\uncertain} \times \Param} \eqfinv
  	\\
  	\subset& \bigcap_{\uncertain\in\Support{\va{\Uncertain}}}
  	\mathcal{K}_{\coutint_{\uncertain}} \eqfinv
  \end{align*}
  which implies that the function~$Q_{\coutint}$ belongs to $\Theta_\mathcal{K}\nc{\STATE\times \CONTROL,\Param}$.

  Second, we consider the function
  $Q_{\varphi}= \Besp{\varphi\bp{\dynamics(\cdot,\cdot,\va{\Uncertain}),\cdot}} :
  \STATE\times\CONTROL\times \PARAM \to \Val$.
  From the assumptions that $\varphi \in \Theta\nc{\STATE,\Param}$
  and~$\dynamics$ is affine,
  we get that the function~$Q_{\varphi}$ is convex lsc and
  that its effective domain is a Cartesian product\footnote{%
    Had the mapping~$\dynamics$ depended on the parameter~$\param$, we could
    not have concluded that the effective domain of the function~$Q_{\varphi}$
    be a Cartesian product.}.
  The differentiability of the function~$Q_{\varphi}$
  with respect to~$\param$ is straightforward.

  Gathering the results obtained for~$Q_{\coutint}$ and~$Q_{\varphi}$,
  since $\mathcal{Q}\nc{\varphi}=Q_{\coutint} + Q_{\varphi}$ and
  using the fact that $\dom (Q_{\coutint} + Q_{\varphi}) \subset \dom Q_{\coutint}$,
  we obtain that~$\mathcal{Q}\nc{\varphi}$ belongs
  to $\Theta_\mathcal{K}\nc{\STATE\times \CONTROL,\Param}$.

  \medskip

  $(ii)$ We have just proven that the function $\mathcal{Q}\nc{\varphi}$
  belongs to $\Theta_\mathcal{K}\nc{\STATE\times \CONTROL,\Param}$
  when $\varphi \in \Theta\nc{\STATE,\Param}$
 and that $\coutint_{\uncertain} \in \Theta_\mathcal{K}\nc{\STATE \times
   \CONTROL,\Param}$
   for all $\uncertain$ in the support of~$\va{\Uncertain}$.
  %
  %
  Now, there remains 
  to prove that the function~$\mathcal{B}\nc{\varphi}$ belongs to
  $\Theta\nc{\STATE,\Param}$.

  $\bullet$ First, we prove that the function~$\mathcal{B}\nc{\varphi}$ is convex lsc.
  Since, by definition,
  \begin{equation}
    \mathcal{B}\nc{\varphi}(\state, \param)
    = \inf_{\control \in \CONTROL}
    \mathcal{Q}\nc{\varphi}(\state, \control, \param) \eqsepv
    \forall (\state, \control) \in \STATE \times \CONTROL \eqfinv
  \end{equation}
  we deduce that the function~$\mathcal{B}\nc{\varphi}$ is convex
  as the marginal of a convex function.
  Then, let $K_{\mathcal{Q}\nc{\varphi}} \subset \CONTROL$
  be a compact set associated with~$\mathcal{Q}\nc{\varphi}$
  as in Definition~\ref{de:compacity}. For all
  $(\state, \control) \in \STATE \times \CONTROL$, we have that
  \begin{equation}
    \mathcal{B}\nc{\varphi}(\state, \param)
    = \inf_{\control \in \dom \np{\mathcal{Q}\nc{\varphi}}(\state,\cdot, \param)}
    \mathcal{Q}\nc{\varphi}(\state, \control, \param)
    = \inf_{\control \in K_{\mathcal{Q}\nc{\varphi}}}
    \mathcal{Q}\nc{\varphi}(\state, \control, \param)
    \label{eq:Bphi-lsc}
    \eqfinp
  \end{equation}
  Using the last expression in Equation~\eqref{eq:Bphi-lsc} and \cite[Lemma 1.30]{Bauschke-Combettes:2017},
  we get that the function~$\mathcal{B}\nc{\varphi}$ is lsc. Moreover, we obtain that
  the infimum above --- hence in~\eqref{eq:Bellman_Operator} ---
  is attained, and   we deduce that
  \begin{equation}
    \label{eq:BBtilde}%
    \forall (\state, \param) \in \STATE\times \Param \eqsepv
    \exists \control\opt \in \CONTROL \eqsepv
    \mathcal{B}\nc{\varphi}(\state, \param)
    = \mathcal{Q}\nc{\varphi}(\state, \control\opt, \param)
    \eqfinp
  \end{equation}
As a consequence, the function~$\mathcal{B}\nc{\varphi}$ never takes the value~$-\infty$
  (as so does $\mathcal{Q}\nc{\varphi}$).

  $\bullet$ Second, we prove the effective domain rectangular property (in
    Definition~\ref{de:Theta}) of
  the function~$\mathcal{B}\nc{\varphi}$.
  If $\dom\bp{\mathcal{B}\nc{\varphi}} = \emptyset$,
  the effective domain is rectangular.
  %
  We now consider the case where $\dom\bp{\mathcal{B}\nc{\varphi}} \neq \emptyset$.
  Let $(\state, \param) \in \dom\bp{\mathcal{B}\nc{\varphi}}$
  and $\control\opt \in \CONTROL$ be a minimizer
  of~\eqref{eq:Bellman_Operator} at $(\state, \param)$.
  Then, by~\eqref{eq:BBtilde}, we get that
  $\mathcal{Q}\nc{\varphi}(\state,\control\opt,\param)= \mathcal{B}\nc{\varphi}\np{\state,\param} <+\infty$
  since $(\state, \param) \in \dom\bp{\mathcal{B}\nc{\varphi}}$, hence we deduce
that $(\state,\control\opt,\param) \in \dom \np{\mathcal{Q}\nc{\varphi}}$.
  Now, using the fact that
  $\mathcal{Q}\nc{\varphi} \in \Theta\nc{\STATE\times\CONTROL,\Param}$, we also have that
  $\dom \np{\mathcal{Q}\nc{\varphi}}$ is a product and moreover, as just seen, the product is nonempty.
  Consider now $\param'\in \Param$. By the product property of the effective domain of~$\mathcal{Q}\nc{\varphi}$, we have that
  $(\state,\control\opt,\param') \in \dom \mathcal{Q}\nc{\varphi}$. Using the fact that
  $\mathcal{B}\nc{\varphi}\np{\state,\param'} \le \mathcal{Q}\nc{\varphi}(\state,\control\opt,\param') < +\infty$,
  we obtain that $\param' \in \dom\bp{\mathcal{B}\nc{\varphi}(\state, \cdot)}$, finally giving that
  $\dom\bp{\mathcal{B}\nc{\varphi}(\state, \cdot)} = \Param$.
  This proves that
  $\dom\bp{\mathcal{B}\nc{\varphi}} = \State \times \Param$,
  for some set $\State \subset \STATE$.
  \medskip

  $\bullet$ Third, we turn to the parametric differentiability of
  the function~$\mathcal{B}\nc{\varphi}: \STATE \times \PARAM \to \Val$.
  The only relevant case is when $\dom\bp{\mathcal{B}\nc{\varphi}} \neq \emptyset$.
  For this purpose, we consider a fixed $\state \in \State$ (where  $\dom\bp{\mathcal{B}\nc{\varphi}} = \State \times \Param$),
  and we introduce the two functions
  \begin{equation}
    \psi_\state = \mathcal{B}\nc{\varphi}(\state, \cdot)
    \quad \text{ and } \quad
    \Psi_\state =
    \mathcal{Q}\nc{\varphi}(\state, \cdot, \cdot)
    \eqfinp
  \end{equation}
  Now, the parametric differentiability of
  the function~$\mathcal{B}\nc{\varphi}$ boils down to that
of the function~$\psi_\state $.
As a preliminary result,    we show that  \( \partial_\param  \Psi_\state(\control, \param)
= \ba{ \nabla_\param \Psi_\state \np{\control, \param} } \) for any
\( \np{\control, \param} \in \CONTROL \times \interior{\Param} \) such that
\( \np{\state, \control, \param} \in \dom \np{\mathcal{Q}\nc{\varphi}}\).
  Indeed, as the function $\mathcal{Q}\nc{\varphi}$
  belongs to the set $\Theta_\mathcal{K}\nc{\STATE\times \CONTROL,\Param}$, we obtain two results:
  on the one hand, the function $\Psi_\state : \CONTROL \times \PARAM \to \Val$
  is differentiable with respect to its second argument~$\param \in \interior{\Param}$;
  on the other hand, as the function $\Psi_\state : \CONTROL \times \PARAM \to \Val$
is proper, convex and lsc, we deduce that it is
subdifferentiable with respect to its second argument~$\param \in \interior{\Param}$,
by \cite[Proposition 16.27]{Bauschke-Combettes:2017}.
Thus, by \cite[Proposition 17.31 (i)]{Bauschke-Combettes:2017},
we conclude that  \( \partial_\param  \Psi_\state(\control, \param)
= \ba{ \nabla_\param \Psi_\state \np{\control, \param} } \), where both objects are well-defined
for any
\( \np{\control, \param} \in \CONTROL \times \interior{\Param} \) such that
\( \np{\state, \control, \param} \in \dom \np{\mathcal{Q}\nc{\varphi}}\).
Now, as said above, we study the differentiabilty of the function~$\psi_\state $.
For this purpose, we consider $\param \in \interior{\Param}$
and $\control\opt \in \CONTROL$ a minimizer as in~\eqref{eq:BBtilde}.
  By \cite[Corollary 2.63]{Mordukhovich-2013}, we get that
  the function~$\psi_\state $ is is proper, convex, lsc, subdifferentiable and that
  \( \partial\psi_\state(\param) =
  \defset{s\in\PARAM }{\np{0, s} \in \partial\Psi_\state(\control\opt, \param)} \).
  It is easily proved that the projection of~\( \partial\Psi_\state(\control\opt, \param) \)
on its second component is included in~\( \partial_\param\Psi_\state(\control\opt, \param) \).
  As we have just proven that
  \(  \partial_\param\Psi_\state(\control\opt, \param)
  = \ba{ \nabla_\param \Psi_\state \np{\control\opt, \param} } \),
  we deduce that the nonempty set
  $\partial \psi_\state(\param)$ is a singleton.
  By \cite[Proposition 17.31 (ii)]{Bauschke-Combettes:2017},
  we conclude that the function~$\psi_\state$ is differentiable at~$\param$
  and that
  $\nabla \psi_\state(\param)
  = \nabla_\param \Psi_\state \np{\control\opt, \param}$, that is,
  \begin{equation}
    \label{eq:Bellman_gradient_before}%
    \nabla_\param {\cal B}\nc{\varphi} (\state,\param) =\nabla_\param
    \Besp{\coutint(\state,\control\opt,\va{\Uncertain},\param)
      + \varphi\bp{\dynamics(\state,\control\opt,\va{\Uncertain}),\param}}
    \eqfinp
  \end{equation}

  This finally proves that
  $\mathcal{B}\nc{\varphi} \in \Theta\nc{\STATE,\Param}$, that is,
  the set~$\Theta\nc{\STATE,\Param}$ is stable by
  the Bellman operator~$\mathcal{B}$. Moreover, since the support
  of~$\va{\Uncertain}$  is a finite set, exchanging the derivation
  and expectation operators in~\eqref{eq:Bellman_gradient_before}
  is trivial, so that~\eqref{eq:Bellman_gradient} holds true.
  \medskip

  As for claim~\textit{1.},
  when
\( \coutint_{\uncertain} \in \Gamma_\mathcal{K}\nc{\STATE \times
  \CONTROL,\PARAM} \)
     for all $\uncertain$ in the support of~$\va{\Uncertain}$,
  and $\varphi \in \Gamma\nc{\STATE,\PARAM}$,
  we obtain with analogous arguments that
  $(i)$
  the function $\mathcal{Q}\nc{\varphi}$
  belongs to the set $\Gamma_\mathcal{K}\nc{\STATE\times\CONTROL,\PARAM}$,
  $(ii)$ the function $\cal B\nc{\varphi}$
  belongs to the set $\Gamma\nc{\STATE,\PARAM}$.
\end{proof}

Figure~\ref{fig:notations} illustrates the links induced by the Bellman
operator~$\mathcal{B}$ between the different subsets of functions, as revealed
by Theorem~\ref{th:differentiability}. The operator~$\mathcal{Q}$
is defined in \eqref{eq:defBtilde} and corresponds to the operator
$\mathcal{B}$ before taking the infimum in~$\control$.

\begin{figure}[htpb]
  \centering
  {\includegraphics[width=0.8\linewidth]{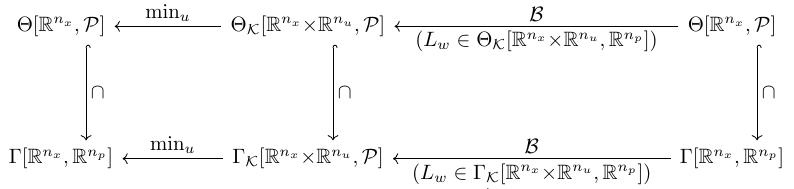}}
  \caption{Stability of the classes of functions~$\Theta$
    (Definition~\ref{de:Theta})
    and~$\Gamma$ (Definition~\ref{de:Gamma})
    by the Bellman operator~$\mathcal{B}$ (Equation~\eqref{eq:Bellman_Operator})
  \label{fig:notations}}
\end{figure}

\subsubsubsection{Application to Parametric Value Functions}
We introduce assumptions on the data
of Problem~\eqref{eq:parametric_multistage_problem}
to enforce the convexity and the parametric differentiability
of the parametric value functions
$\na{\ValueFunction_t}_{t \in \ic{0, \horizon}}$
in~\eqref{eq:parametric_value_functions}.

\begin{assumption}[convex multistage problem]
  \label{as:convexity}%
  We assume that
  \begin{enumerate}
  \item Problem~\eqref{eq:parametric_multistage_problem}
    is feasible,
  \item the dynamics $\na{\dynamics_t}_{t \in \ic{0,T{-}1}}$
    in~\eqref{eq:dynamics_ch2}
    are affine with respect to their arguments $(\state, \control)$,
  \item
    for all $t \in \ic{0,T{-}1}$,
    the functions 
$\coutint_t(\cdot, \cdot, \uncertain, \cdot)$
    ---defined after the stage cost $\coutint_t$ in~\eqref{eq:parametric_stage_cost}---
    belong to $\Gamma_\mathcal{K}\nc{\STATE\times \CONTROL,\PARAM}$,
        for all $\uncertain$ in the support of~$\va{\Uncertain}$,
  \item the final cost $\Final$
    in~\eqref{eq:parametric_final_cost} belongs to $\Gamma\nc{\STATE,\PARAM}$.
  \end{enumerate}
\end{assumption}

Assumption~\ref{as:convexity} puts us in a standard context
for convex stochastic multistage optimization problems
(see e.g. \cite[assumption $H_1$]{girardeau2015convergence}).
By assuming that Problem~\eqref{eq:parametric_multistage_problem}
is feasible, we obtain that the function~$\ValueFunction_0$ is proper
--- as $\Phi = \ValueFunction_0(\state_0 \sep \cdot)$ ---
and therefore that all parametric value functions
$\na{\ValueFunction_t}_{t \in \ic{0, \horizon}}$ are proper
--- due to the Bellman equations~\eqref{eq:parametric_value_functions}.
At a lower level of detail, this can also be enforced
by a ``relatively complete recourse'' assumption
(see e.g \cite{girardeau2015convergence, leclere2020exact}).

We make a second assumption to handle parametric differentiability.	

\begin{assumption}[parametric differentiability]
  \label{as:smoothness}%
  Let~$\Param$ be a given subset of~$\PARAM$. We assume that
  \begin{enumerate}
  \item for all $t \in \ic{0,T{-}1}$,
    the functions 
$\coutint_t(\cdot, \cdot, \uncertain, \cdot)$
    ---defined after the stage cost $\coutint_t$ in~\eqref{eq:parametric_stage_cost}---
    belong to $\Theta_\mathcal{K}\nc{\STATE\times \CONTROL,\Param}$,
    for all $\uncertain$ in the support of~$\va{\Uncertain}$,
  \item the final cost $\Final$
    in~\eqref{eq:parametric_final_cost} belongs to $\Theta\nc{\STATE,\Param}$.
  \end{enumerate}
\end{assumption}

We now state our main result regarding the differentiability
of the parametric value functions
$\na{\ValueFunction_t}_{t \in \ic{0, \horizon}}$.

\begin{theorem}\label{th:smooth_convex_problem}%
  Under the discrete white noise Assumption~\ref{as:discrete_white_noise} and
  the convex multistage problem Assumption~\ref{as:convexity},
  we have that the parametric value functions
    $\sequence{\ValueFunction_t}{t \in \ic{0, \horizon}}$
    defined in~\eqref{eq:parametric_value_functions}
    are proper and belong to $\Gamma\nc{\STATE,\PARAM}$.

Moreover, under
the parametric differentiability Assumption~\ref{as:smoothness},
we get that
\begin{itemize}
  \item
    the parametric value functions
    $\sequence{\ValueFunction_t}{t \in \ic{0, \horizon}}$
    belong to $\Theta\nc{\STATE,\Param}$
    and, for all $p\in \interior{\Param}$, their gradients can be
    computed by backward induction, with, at final stage~$\horizon$,
    \begin{subequations}
      \label{eq:backward_induction}%
      \begin{align}
        \nabla_\param \ValueFunction_\horizon \np{\state \sep \param}
        &=
          \nabla_\param \Final(\state , \param)
          \eqsepv
          \forall \state \in \dom \ValueFunction_\horizon \np{\cdot \sep \param} \eqfinv
          \label{eq:value_function_gradient_T}
          \intertext{and, at any stage $t \in \ic{0, \horizon-1}$ and for all $\state \in \dom \ValueFunction_t \np{\cdot \sep \param}$,}
          \nabla_\param \ValueFunction_t \np{\state \sep \param}
        &=
          \EE
          \Bc{\nabla_\param
          \coutint_t(\state, \control\opt, \rvUncertain_{t+1}, \param)
          + \nabla_\param \ValueFunction_{t+1}
          \bp{\dynamics_t\np{\state, \control\opt, \rvUncertain_{t+1}} \sep \param}}
          \eqfinv \label{eq:value_function_gradient}%
      \end{align}
      where the control~$\control\opt$ is any control in the solution set
      $\SolutionSet_t(\state , \param)$ defined in~\eqref{eq:solution_sets},
    \end{subequations}
  \item
    for any closed convex subset~${\ParamAd} \subset \Param$,
    the upstream optimization problem~\eqref{eq:upstream_problem}
    is a convex differentiable optimization problem.
\end{itemize}
\end{theorem}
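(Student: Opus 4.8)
The plan is to recognize Theorem~\ref{th:smooth_convex_problem} as a backward induction built directly on Theorem~\ref{th:differentiability}. By the Bellman equations~\eqref{eq:parametric_value_functions}, each value function is the image of the next one under the stage Bellman operator: writing $\mathcal{B}_t$ for the operator~\eqref{eq:Bellman_Operator} instantiated with $\coutint=\coutint_t$, $\dynamics=\dynamics_t$ and $\va{\Uncertain}=\rvUncertain_{t+1}$, we have $\ValueFunction_\horizon=\Final$ and $\ValueFunction_t=\mathcal{B}_t\np{\ValueFunction_{t+1}}$ for all $t\in\ic{0,\horizon-1}$. Under the discrete white noise Assumption~\ref{as:discrete_white_noise} each $\rvUncertain_{t+1}$ takes finitely many values, and under Assumption~\ref{as:convexity} each $\dynamics_t$ is affine, so the structural hypotheses of Theorem~\ref{th:differentiability} are met by every $\mathcal{B}_t$.

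For the first claim, I would run a backward induction to establish membership in $\Gamma\nc{\STATE,\PARAM}$. The base case $\ValueFunction_\horizon=\Final\in\Gamma\nc{\STATE,\PARAM}$ is Assumption~\ref{as:convexity}~(\textit{4.}); for the step, assuming $\ValueFunction_{t+1}\in\Gamma\nc{\STATE,\PARAM}$ and using that the functions $\coutint_t(\cdot,\cdot,\uncertain,\cdot)$ lie in $\Gamma_\mathcal{K}\nc{\STATE\times\CONTROL,\PARAM}$ by Assumption~\ref{as:convexity}~(\textit{3.}), claim~(\textit{1.}) of Theorem~\ref{th:differentiability} yields $\ValueFunction_t=\mathcal{B}_t\np{\ValueFunction_{t+1}}\in\Gamma\nc{\STATE,\PARAM}$. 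Properness does not follow from this stability alone, since the constant $+\infty$ function also belongs to $\Gamma\nc{\STATE,\PARAM}$, and this is the one point deserving separate care. Here I would invoke feasibility (Assumption~\ref{as:convexity}~(\textit{1.})), which makes $\Phi=\ValueFunction_0(\state_0,\cdot)$, hence $\ValueFunction_0$, proper, and then read the Bellman equation~\eqref{eq:marginal_parametric_value_functions} forward: finiteness of $\ValueFunction_0$ at some feasible $(\state_0,\param)$ provides a control whose expected cost is finite, and since that expectation is a finite sum of terms in $\Val$ (none equal to $-\infty$), each summand is finite, forcing $\ValueFunction_1$ to be finite at the reachable states. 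Iterating over $t$ shows $\dom\ValueFunction_t\neq\emptyset$ for every $t$, so all $\ValueFunction_t$ are proper.

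For the second claim, I would repeat the induction in the finer class $\Theta\nc{\STATE,\Param}$ under Assumption~\ref{as:smoothness}. The base case $\ValueFunction_\horizon=\Final\in\Theta\nc{\STATE,\Param}$ is Assumption~\ref{as:smoothness}~(\textit{2.}), and differentiating directly gives~\eqref{eq:value_function_gradient_T}. For the step, assuming $\ValueFunction_{t+1}\in\Theta\nc{\STATE,\Param}$ and using that $\coutint_t(\cdot,\cdot,\uncertain,\cdot)\in\Theta_\mathcal{K}\nc{\STATE\times\CONTROL,\Param}$ by Assumption~\ref{as:smoothness}~(\textit{1.}), claim~(\textit{2.}) of Theorem~\ref{th:differentiability} gives $\ValueFunction_t=\mathcal{B}_t\np{\ValueFunction_{t+1}}\in\Theta\nc{\STATE,\Param}$ together with the gradient formula~\eqref{eq:Bellman_gradient} at every $\param\in\interior\Param$. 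Specializing that formula to $\coutint_t$, $\dynamics_t$, $\rvUncertain_{t+1}$ and to any $\control\opt$ in the argmin set $\SolutionSet_t(\state,\param)$ of~\eqref{eq:solution_sets} (nonempty by Theorem~\ref{th:differentiability}) reproduces exactly~\eqref{eq:value_function_gradient}, which closes the induction.

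Finally, the statement on the upstream problem is read off from $\ValueFunction_0\in\Theta\nc{\STATE,\Param}$. Convexity of $\Phi=\ValueFunction_0(\state_0,\cdot)$ follows from $\ValueFunction_0\in\Gamma\nc{\STATE,\PARAM}$; since $\dom\ValueFunction_0$ is the product $\Pairxu_{\ValueFunction_0}\times\Param$ and $\state_0\in\Pairxu_{\ValueFunction_0}$ by the properness just obtained, the map $\Phi$ is differentiable on $\interior\Param$. Hence, for a closed convex $\ParamAd\subset\interior\Param$ (as framed at the start of this section), minimizing $\Phi$ over $\ParamAd$ is a convex differentiable optimization problem. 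The only genuinely delicate step in the whole argument is the propagation of properness discussed above; everything else is a faithful transcription of Theorem~\ref{th:differentiability} along the Bellman recursion~\eqref{eq:parametric_value_functions}.
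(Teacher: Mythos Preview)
Your proposal is correct and follows essentially the same approach as the paper: define the stagewise Bellman operators $\mathcal{B}_t$ and apply Theorem~\ref{th:differentiability} backward in time, first with claim~(\textit{1.}) for $\Gamma$, then with claim~(\textit{2.}) for $\Theta$ and the gradient formula. Your treatment is in fact more explicit than the paper's, which dispatches the whole proof in two lines; in particular, your forward propagation of properness from feasibility of $\ValueFunction_0$ through the Bellman recursion spells out what the paper only alludes to in the paragraph following Assumption~\ref{as:convexity}.
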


\begin{proof} For each time $t\in \ic{0,\horizon}$, we consider the Bellman operator~$\mathcal{B}_t$ defined, for any
  function~$\varphi : \STATE \times \PARAM \to \Val$ and any for all $(\state, \param) \in \STATE\times \PARAM$, by
  $$
  {\cal B}_t\nc{\varphi} (\state,\param) =
  \inf_{\control\in\CONTROL}
  \Besp{\coutint_t(\state,\control,\va{\Uncertain},\param)
    + \varphi
    \bp{\dynamics(\state,\control,\va{\Uncertain}), \param}}
  \eqfinp
  $$
  Thus,
  the value functions $\sequence{\ValueFunction_t}{t \in \ic{0, \horizon}}$
  defined in~\eqref{eq:parametric_value_functions} satisfy $\ValueFunction_{\horizon}=\Final$ and
  for all $t\in \ic{0,T{-}1}$, $\ValueFunction_{t} = {\cal B}_t \nc{\ValueFunction_{t+1}}$.
  The proof follows by applying backward in time Theorem~\ref{th:differentiability} to the Bellman operator
  ${\cal B}_t$ and the value function $\ValueFunction_{t+1}$.
\end{proof}

\begin{remark}
In the case where Problem~\eqref{eq:multistage_criterion}--\eqref{eq:constraints_end}
has explicit constraints of the form
$\rvControl_t \in \ConstraintSet_t\np{\rvState_t, \param}$,
at each time~$t$, the parametric stage cost~$\coutint_t$
in~\eqref{eq:parametric_stage_cost} incorporates the indicator
function $\indicator_{\text{gr}\np{\ConstraintSet_t}}$,
where $\text{gr}\np{\ConstraintSet_t}$ is the graph
of the set-valued mapping~$\ConstraintSet_{t}$,
defined as the set 
$\defset{(\state, \control, \param) \in \STATE\times\CONTROL\times\PARAM}{\control \in \ConstraintSet_t(\state, \param)}$
(see Remark~\ref{rm:explicit_constraints}). 
Then, a fairly natural
condition to ensure the differentiability of~$\coutint_{t}$ with
respect to the parameter~$\param$ is that~$\ConstraintSet_{t}$ does not depend on~$\param$.
\end{remark}

\begin{remark}
  Referring to our discussion
  on  the role of the parameter
  in \S\ref{sec:role_of_parameter},
  observe that in the
  backward induction~\eqref{eq:backward_induction},
  the parameter $\param$ is always fixed.
  This is in contrast with the state variable $\state$,
  whose evolution is ruled by the dynamics
  $\na{\dynamics_t}_{t \in \ic{0,T{-}1}}$
  in~\eqref{eq:dynamics_ch2}.
\end{remark}

\subsection{Regularization of Convex Nondifferentiable Parametric Value Functions}
\label{sec:lower_smooth}

We now turn to the nondifferentiable case, that is, we do not assume
anymore that the functions~$\na{\coutint_{t}}_{t\in\ic{0,\horizon{-}1}}$
and~$\coutfin$ in~\S\ref{sec:problem_formulation} are differentiable with respect to the parameter~$\param$.
To overcome this drawback and go back to the differentiable situation,
we appeal to the Morean-Yosida regularization. We recall the definition
of the Moreau envelope \cite{moreau1965proximite, yosida1971functional}.

\begin{definition}
  Let $n \in \NN^*$, $f : \RR^n \to \barRR$ be a function
  and $\mu \in \spRR$ be a regularization coefficient.
  The Moreau envelope of $f$ is the function
  \begin{equation}
    f^\mu : \RR^n \to \barRR \eqsepv z \mapsto \inf_{z' \in \RR^n}
    \Bp{f(z') + \frac{1}{2\mu}\Norm{z-z'}_2^2}
    \eqfinp
  \end{equation}
\end{definition}

We refer to \cite[Chapter~1, \S G]{rockafellar2009variational} and
\cite[Chapter~12, \S4]{Bauschke-Combettes:2017} for a review of the properties
of the Moreau envelope.  Given values of
$\np{\state, \control, \uncertain} \in \STATE \times \CONTROL \times \UNCERTAIN$ and a
regularization coefficient $\mu \in \spRR$, we introduce the \emph{parametric Moreau
  envelopes}
$\sequence{\StageCost_{t}^\mu(\state, \control, \uncertain, \cdot)} {t \in \ic{0,
    \horizon-1}}$ and $\Final^\mu(\state, \cdot)$ of the parametric cost functions
$\na{\coutint_t}_{t \in \ic{0,T{-}1}}$ in~\eqref{eq:parametric_stage_cost} and
$\Final$ in~\eqref{eq:parametric_final_cost}, with respect to the
parameter~$\param$ in~\eqref{eq:parameter}, defined as
\begin{subequations}
  \label{eq:my_cost}%
  \begin{align}
    \StageCost_{t}^\mu(\state, \control, \uncertain, \param)
    &=
      \inf_{\param'\in\PARAM}
      \Bp{\StageCost_{t}(\state, \control, \uncertain, \param')
      + \frac{1}{2\mu}\Norm{\param - \param'}^2_2}
      \eqsepv
      \forall t \in \ic{0, \horizon-1}
      \eqsepv
      \forall \param
      \in \PARAM
      \eqfinv \label{eq:my_stage_cost}
    \\
    \Final^\mu(\state, \param)
    &=
      \inf_{\param'\in\PARAM}
      \Bp{\Final(\state, \param') +
      \frac{1}{2\mu}\Norm{\param - \param'}^2_2}
      \eqsepv
      \forall \param \in \PARAM
      \eqfinp \label{eq:my_final_cost}%
  \end{align}
\end{subequations}

In order to ensure that the regularized parametric cost
functions $\na{\StageCost_{t}^\mu}_{t \in \ic{0,\horizon{-}1}}$
and $\Final^\mu$ are lsc, we introduce the
following parametric compacity assumption.

\begin{assumption}[parametric compacity]
  \label{as:compacity}%
  We suppose that the discrete white noise
  Assumption~\ref{as:discrete_white_noise} holds true, and
  that there exists
  a compact set~$\Param \subset \PARAM$
  such that
  for all $t \in \ic{0,T{-}1}$
  and for all $\uncertain \in \Support{\rvUncertain_t}$,
  the stage costs
  $\coutint_t$ in~\eqref{eq:parametric_stage_cost}
  satisfy
  \begin{subequations}
    \begin{align}
      &\forall (\state, \control) \in \STATE{\times}\CONTROL
        \eqsepv
        \dom {\coutint_t(\state, \control, \uncertain, \cdot)} \subset \Param
        \eqfinv
      \intertext{and the final cost $\Final$
      in~\eqref{eq:parametric_final_cost} satisfies}
      &
        \forall \state \in \STATE
        \eqsepv
        \dom \Final(\state, \cdot) \subset \Param
        \eqfinp
    \end{align}
  \end{subequations}
\end{assumption}

\begin{lemma}
  \label{le:smooth_cost}
  Let $\mu > 0$.
  Under the parametric
  compacity Assumption~\ref{as:compacity},
  we have that
  \begin{enumerate}
  \item
    for all $t \in \ic{0,T{-}1}$
    and all $\uncertain \in \Support{\rvUncertain_t}$,
    if $\coutint_t(\cdot, \cdot, \uncertain, \cdot) \in \Gamma_\mathcal{K}[\STATE\times\CONTROL, \PARAM]$,
    then $\coutint_t^\mu(\cdot, \cdot, \uncertain, \cdot) \in \Theta_\mathcal{K}[\STATE\times\CONTROL, \PARAM]$,
  \item if $\Final \in \Gamma[\STATE, \PARAM]$,
    then $\Final^\mu \in \Theta[\STATE, \PARAM]$.
  \end{enumerate}
\end{lemma}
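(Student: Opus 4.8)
The plan is to recognize both claims as instances of a single fact: the parameter-wise Moreau envelope of a proper lsc convex function is finite-valued, convex, and everywhere differentiable, while the compactness in Assumption~\ref{as:compacity} is precisely what promotes lower semicontinuity from the individual slices to the joint function. I would prove claim~(1) in full and then obtain claim~(2) by the same argument with the control variable (and the compacity step) removed. Fixing $\uncertain \in \Support{\rvUncertain_t}$ and writing $f = \coutint_t(\cdot, \cdot, \uncertain, \cdot) \in \Gamma_\mathcal{K}[\STATE\times\CONTROL, \PARAM]$, the key preliminary observation is that, since $f$ never takes the value $-\infty$ and $\dom f(\state, \control, \cdot) \subset \Param$ with $\Param$ compact, the defining infimum can be restricted to the compact set $\Param$:
\[
\coutint_t^\mu(\state, \control, \uncertain, \param) = \inf_{\param' \in \Param} \Bp{f(\state, \control, \param') + \tfrac{1}{2\mu}\Norm{\param - \param'}_2^2} \eqfinp
\]

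From there I would check the four requirements in turn. Convexity follows because the bracketed integrand is jointly convex in $(\state, \control, \param, \param')$ and partial minimization in $\param'$ preserves convexity. Lower semicontinuity follows because that integrand is jointly lsc and $\Param$ is nonempty compact, so \cite[Lemma~1.30]{Bauschke-Combettes:2017} --- the very tool already used in the proof of Theorem~\ref{th:differentiability} --- yields both the lsc of $\coutint_t^\mu(\cdot, \cdot, \uncertain, \cdot)$ and attainment of the infimum. The product-domain property is essentially immediate: $\coutint_t^\mu(\state, \control, \uncertain, \param) < +\infty$ holds exactly when some $\param'$ makes $f(\state, \control, \param')$ finite, a condition that does not involve $\param$, so with $\Pairxu$ the projection of $\dom f$ onto $\STATE\times\CONTROL$ one gets $\dom \coutint_t^\mu(\cdot, \cdot, \uncertain, \cdot) = \Pairxu \times \PARAM$. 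Differentiability in $\param$ is then a direct appeal to classical Moreau envelope theory: for $(\state, \control) \in \Pairxu$ the slice $f(\state, \control, \cdot)$ is proper, convex and lsc, hence its Moreau envelope $\coutint_t^\mu(\state, \control, \uncertain, \cdot)$ is finite and Fr\'{e}chet differentiable on all of $\PARAM = \interior \PARAM$ \cite[Chapter~12, \S4]{Bauschke-Combettes:2017}. Finally, the compacity requirement holds because every $(\state, \control) \in \Pairxu$ admits $\param'$ with $\control \in \dom f(\state, \cdot, \param') \subset \mathcal{K}_f$, so $\dom \coutint_t^\mu(\state, \cdot, \uncertain, \param) \subset \mathcal{K}_f$ uniformly in $(\state, \param)$; this gives $\coutint_t^\mu(\cdot, \cdot, \uncertain, \cdot) \in \Theta_\mathcal{K}[\STATE\times\CONTROL, \PARAM]$, and the identical argument for $\Final \in \Gamma[\STATE, \PARAM]$ yields $\Final^\mu \in \Theta[\STATE, \PARAM]$.

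The step I expect to be the main obstacle is the lower semicontinuity of the joint envelope, since partial minimization of a convex lsc function is convex but generally fails to remain lsc; the argument therefore rests entirely on using Assumption~\ref{as:compacity} to rewrite the infimum over $\PARAM$ as an infimum over the compact set $\Param$ before invoking \cite[Lemma~1.30]{Bauschke-Combettes:2017}. By contrast, the differentiability --- the property that makes the regularization worthwhile --- comes for free from the smoothing effect of the Moreau envelope on each proper lsc convex slice, and the product-domain structure comes for free from the finiteness of the quadratic penalty.
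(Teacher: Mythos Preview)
Your proposal is correct and follows essentially the same route as the paper's proof: restrict the infimum to the compact set~$\Param$ via Assumption~\ref{as:compacity}, invoke \cite[Lemma~1.30]{Bauschke-Combettes:2017} for joint lower semicontinuity and attainment, and then appeal to the standard Moreau-envelope results for the product-domain and differentiability properties on each slice. The only cosmetic difference is that the paper deduces the product domain by citing \cite[Proposition~12.15]{Bauschke-Combettes:2017} (finiteness of the envelope of a proper convex lsc function) rather than your direct observation that the quadratic penalty is finite, and it obtains the control-compacity via the attained minimizer rather than via the projection of~$\dom f$; both variants are equivalent.
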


\begin{proof}
  Let $\uncertain\in \Support{\rvUncertain_t}$ be given and consider
  the function
  $\coutint_t(\cdot, \cdot, \uncertain, \cdot)$
  and its parametric Moreau envelope
  $\coutint_t^\mu(\cdot, \cdot, \uncertain, \cdot)$
  defined by~\eqref{eq:my_cost}.

  First, we prove that
  $\coutint_t^\mu(\cdot, \cdot, \uncertain, \cdot) \in \Gamma_\mathcal{K}[\STATE\times\CONTROL, \PARAM]$.
  As the marginal of a convex function, the function
  $\coutint_t^\mu(\cdot, \cdot, \uncertain, \cdot)$
  is convex.
  The infimum in~\eqref{eq:my_cost} can be
  taken equivalently over the fixed compact set
  $\Param \subset \PARAM$ given in Assumption~\ref{as:compacity}.
  As $\coutint_t(\cdot, \cdot, \uncertain, \cdot)
  \in \Gamma_\mathcal{K}[\STATE\times\CONTROL, \PARAM]$
  is lsc, we deduce that the marginal function
  $\coutint_t^\mu(\cdot, \cdot, \uncertain, \cdot)$
  is also lsc, and that the infimum in~\eqref{eq:my_cost}
  is attained \cite[Lemma~1.30]{Bauschke-Combettes:2017}.
  It follows that $\coutint_t^\mu(\cdot, \cdot, \uncertain, \cdot)$
  takes values in $\Val$ --- as so does
  $\coutint_t(\cdot, \cdot, \uncertain, \cdot)$ ---
  and therefore that
  $\coutint_t^\mu(\cdot, \cdot, \uncertain, \cdot) \in \Gamma[\STATE\times\CONTROL, \PARAM]$.
  Now, let $\control \in \CONTROL$ be such that
  there exists $(\state, \param) \in \STATE \times \PARAM$
  so that
  $(\state, \control, \param)
  \in \domain \coutint_t^\mu(\cdot, \cdot, \uncertain, \cdot)$.
  As we have just seen, there exists a minimizer
  $\param\opt \in \PARAM$ such that
  \begin{equation}
    \coutint_t^\mu(\state, \control, \uncertain, \param)
    = \coutint_t(\state, \control, \uncertain, \param\opt)
    + \frac{1}{2\mu} \Norm{\param - \param\opt}_2^2
    \eqfinp
    \label{eq:MY_equality}%
  \end{equation}
  Necessary, $(\state, \control, \param\opt) \in
  \domain \coutint_t(\cdot, \cdot, \uncertain, \cdot)$,
  which proves that $\control \in \mathcal{K}_{\coutint_t(\cdot, \cdot, \uncertain, \cdot)}$,
  following Definition~\ref{de:compacity}.
  We deduce that
  $\coutint_t^\mu(\cdot, \cdot, \uncertain, \cdot) \in \Gamma_\mathcal{K}[\STATE\times\CONTROL, \PARAM]$.

  Second, we prove that
  $\coutint_t^\mu(\cdot, \cdot, \uncertain, \cdot) \in \Theta_\mathcal{K}[\STATE\times\CONTROL, \PARAM]$.
  Following Definition~\ref{de:compacity}, we only need
  to prove that $\coutint_t^\mu(\cdot, \cdot, \uncertain, \cdot) \in \Theta[\STATE\times\CONTROL, \PARAM]$.
  Let us consider again $(\state, \control, \param)
  \in \domain\coutint_t^\mu(\cdot, \cdot, \uncertain, \cdot)$.
  The function
  $\coutint_t(\state, \control, \uncertain, \cdot)$
  is convex, lsc, never takes the value $-\infty$ (by assumption)
  and its domain is nonempty (by~\eqref{eq:MY_equality}).
  It follows that its Moreau envelope
  $\coutint_t^\mu(\state, \control, \uncertain, \cdot)$
  is finite valued everywhere on $\PARAM$ (\cite[Proposition~12.15]{Bauschke-Combettes:2017})
  and differentiable on $\PARAM$
  (\cite[Proposition~12.30]{Bauschke-Combettes:2017}).
  We deduce that $\coutint_t^\mu(\cdot, \cdot, \uncertain, \cdot) \in \Theta[\STATE\times\CONTROL, \PARAM]$,
  and finally that
  $\coutint_t^\mu(\cdot, \cdot, \uncertain, \cdot) \in \Theta_\mathcal{K}[\STATE\times\CONTROL, \PARAM]$.

  The proof that $\Final^\mu \in \Theta_\mathcal{K}[\STATE, \PARAM]$ is analogous.
\end{proof}

We are now ready to introduce the
\emph{lower smooth parametric value functions}.
For any regularization coefficient~$\mu \in~\spRR$, we define
\begin{subequations}
  \begin{align}
    \SmoothValueFunction_\horizon(\state \sep \param)
    &= \Final^\mu(\state, \param) \eqsepv
      \forall (\state, \param) \in \STATE \times \PARAM \eqfinv
      \label{eq:smooth_final}
    \\
    \SmoothValueFunction_t(\state \sep \param)
    &= \inf_{\control \in \CONTROL
      }\EE
      \Bc{\StageCost_t^\mu(\state, \control, \rvUncertain_{t+1}, \param) + \SmoothValueFunction_{t+1}
      \bp{\dynamics_t\np{\state, \control, \rvUncertain_{t+1}} \sep \param}}
      \eqsepv \notag\\
    &\hspace{4cm}
      \forall (\state, \param) \in \STATE \times \PARAM \eqsepv \forall t \in \ic{0, \horizon-1}
      \eqfinp \label{eq:marginal_smooth_value_functions}%
  \end{align}
  \label{eq:smooth_value_functions}%
\end{subequations}
The lower smooth parametric value functions
$\sequence{\SmoothValueFunction_{t}}{t \in \ic{0,\horizon}}$
have several interesting properties,
which we gather in Theorem~\ref{th:lower_smooth_properties}.

\begin{theorem}
  \label{th:lower_smooth_properties}
  Let $\mu \in \spRR$ be a regularization coefficient,
  and let $\sequence{\SmoothValueFunction_{t}}{t \in \ic{0,\horizon}}$
  be the lower smooth parametric value functions
  defined in~\eqref{eq:smooth_value_functions}.
  Under the discrete white noise Assumption~\ref{as:discrete_white_noise},
  the convex multistage problem Assumption~\ref{as:convexity},
  and the parametric compacity Assumption~\ref{as:compacity},
  \begin{enumerate}
  \item the functions
    $\sequence{\SmoothValueFunction_{t}}{t \in \ic{0,\horizon}}$
    provide lower bounds on the value functions
    $\sequence{\ValueFunction_{t}}{t \in \ic{0,\horizon}}$
    defined in~\eqref{eq:parametric_value_functions}, that is,
    \begin{equation}\label{eq:smooth_lower_bound}
      \SmoothValueFunction_t
      \leq \ValueFunction_t
      \eqsepv
      \forall t \in \ic{0, \horizon}
      \eqfinv
    \end{equation}
  \item the functions
    $\sequence{\SmoothValueFunction_{t}}{t \in \ic{0,\horizon}}$
    are proper and belong to the set $\Theta[\STATE, \PARAM]$ from Definition~\ref{de:Theta}.
    Moreover, their gradients can be computed by backward induction,
    with, at final stage~$\horizon$,
    \begin{subequations}
      \label{eq:backward_induction_Moreau}%
      \begin{align}
        \nabla_\param \SmoothValueFunction_\horizon \np{\state \sep \param}
        &=
          \nabla_\param \Final^\mu(\state,\param) \eqfinv
          \intertext{and, at any stage $t \in \ic{0, \horizon-1}$,}
          \nabla_\param \SmoothValueFunction_t \np{\state \sep \param}
        &=
          \EE
          \Bc{\nabla_\param
          \coutint_t^\mu(\state, \control\opt, \rvUncertain_{t+1},\param)
          + \nabla_\param \SmoothValueFunction_{t+1}
          \bp{\dynamics_t\np{\state,\control\opt,\rvUncertain_{t+1}} \sep \param}}
          \eqfinv \label{eq:value_function_gradient_Moreau}%
      \end{align}
      for any $\control\opt$ in the nonempty
      solution set $\SolutionSet_t(\state,\param)$
      of Problem~\eqref{eq:marginal_smooth_value_functions}
    \end{subequations}
  \end{enumerate}
\end{theorem}

\begin{proof}
  First, we prove Inequality~\eqref{eq:smooth_lower_bound}
  proceeding by backward induction.
  By the properties of the Moreau envelope,
  we have that
  $\ValueFunction_\horizon^\mu = \Final^\mu
  \leq \Final = \ValueFunction_\horizon$
  \cite[Proposition~12.9]{Bauschke-Combettes:2017}.
  Let $t \in \ic{0, \horizon-1}$
  and let us assume that~\eqref{eq:smooth_lower_bound} holds true at stage $t+1$.
  Consequently, by application of \cite[Proposition~12.9]{Bauschke-Combettes:2017},
  we have that for any
  $(\state, \control, \param)
  \in \STATE\times\CONTROL\times\PARAM$
  and $\uncertain \in \Support{\rvUncertain_{t+1}}$,
  \begin{equation*}
    \coutint_t^\mu(\state, \control, \uncertain, \param)
    + \SmoothValueFunction_{t+1}
    \bp{\dynamics_t\np{\state,\control,\uncertain} \sep \param}
    \leq
    \coutint_t(\state, \control, \uncertain, \param)
    + \ValueFunction_{t+1}
    \bp{\dynamics_t\np{\state,\control,\uncertain} \sep \param}
    \eqfinv
  \end{equation*}
  from which we deduce that, for any
  $(\state, \param) \in \STATE\times\PARAM$,
  \begin{align*}
    \SmoothValueFunction_t(\state \sep \param)
    &=
      \inf_{\control \in \CONTROL}
      \EE\Bc{
      \coutint_t^\mu(\state, \control, \rvUncertain_{t+1}, \param)
      + \SmoothValueFunction_{t+1}
      \bp{\dynamics_t\np{\state,\control,\rvUncertain_{t+1}} \sep \param}}
      \eqfinv
    \\
    &\leq
      \inf_{\control \in \CONTROL}
      \EE\Bc{
      \coutint_t(\state, \control, \rvUncertain_{t+1}, \param)
      + \ValueFunction_{t+1}
      \bp{\dynamics_t\np{\state,\control,\rvUncertain_{t+1}} \sep \param}}
      \eqfinv
    \\
    &= \ValueFunction_t(\state \sep \param) \eqfinv
  \end{align*}
  so that Inequality~\eqref{eq:smooth_lower_bound} holds true.

  Second, we apply Theorem~\ref{th:smooth_convex_problem}.
  To do this, we observe that the parametric value functions
  $\sequence{\SmoothValueFunction_{t}}{t \in \ic{0,\horizon}}$
  are the value functions of the new PMSOP
  $\min_{\param \in \ParamAd} \Phi^\mu(\param)$,
  whose definition follows the one of the original
  PMSOP~\eqref{eq:parametric_multistage_problem},
  except that the data of the problem
  $\bp{\sequence{\dynamics_{t}}{t \in \ic{0,\horizon-1}},
    \sequence{\coutint_{t}}{t \in \ic{0,\horizon-1}},
    \Final}$
  is replaced by
  $\bp{\sequence{\dynamics_{t}}{t \in \ic{0,\horizon-1}},
    \sequence{\coutint_{t}^\mu}{t \in \ic{0,\horizon-1}},
    \Final^\mu}$.
  Lemma~\ref{le:smooth_cost} tells us that
  the new components of this data triplet
  satisfies the conditions
  of Theorem~\ref{th:smooth_convex_problem},
  so that we only need to prove that
  the new PMSOP
  $\min_{\param \in \ParamAd} \Phi^\mu(\param)$
  is feasible (to fulfill Assumption~\ref{as:convexity}).
  Again, under the white noise Assumption~\ref{as:discrete_white_noise},
  we have from the dynamic programming principle
  that $\Phi^\mu = \SmoothValueFunction_{0}(\state_0 \sep \cdot)$.
  By definition of the functions
  $\na{\SmoothValueFunction_{t}}_{t\in\ic{0,\horizon}}$
  in~\eqref{eq:smooth_value_functions},
  and by the properties of the cost functions
  $\sequence{\coutint_{t}^\mu}{t \in \ic{0,\horizon-1}}$
  and $\Final^\mu$ in Lemma~\ref{le:smooth_cost},
  we obtain that all functions
  in $\na{\SmoothValueFunction_{t}}_{t\in\ic{0,\horizon}}$
  belong to $\Gamma[\STATE, \PARAM]$
  (applying Theorem~\ref{th:differentiability}
  backward in time).
  It follows that $\SmoothValueFunction_{0}$
  takes values in $\Val$.
  Now, as the original PMSOP
  \eqref{eq:parametric_multistage_problem}
  is feasible, taking $\param \in \domain\Phi \cap \ParamAd$,
  we have that
  $\Phi^\mu(\param) = \SmoothValueFunction_{0}(\state_0 \sep \param) \leq \ValueFunction_{0}(\state_0 \sep \param)
  = \Phi(\param) < +\infty$,
  which proves that the problem
  $\min_{\param \in \ParamAd} \Phi^\mu(\param)$
  is feasible too.
  Therefore, we can apply Theorem~\ref{th:smooth_convex_problem}.
  This concludes the proof.
\end{proof}

To summarize, in the nondifferentiable case, we have obtained
differentiable value functions by Theorem~\ref{th:lower_smooth_properties}
that are lower bounds of the original value functions.
The gradients of these differentiable value functions can be computed
by backward induction as stated by Theorem~\ref{th:smooth_convex_problem}.

\subsection{Convergence Properties of Regularized
  Convex Parametric Value Functions}
\label{sec:convergence_properties}

Finally, we prove some convergence properties of the lower smooth parametric
value functions $\sequence{\SmoothValueFunction_{t}}{t \in \ic{0,\horizon}}$
defined in~\eqref{eq:smooth_value_functions}, which show that they are suitable
candidates to approximate the original value functions
$\sequence{\ValueFunction_{t}}{t \in \ic{0,\horizon}}$
in~\eqref{eq:parametric_value_functions} for solving
Problem~\eqref{eq:parametric_multistage_problem}.  We refer the reader to the
definition of pointwise convergence in~\cite[\S7.A]{rockafellar2009variational},
denoted by ``$\xrightarrow[]{\textsc{p}}$'', and to the definition of
epiconvergence in~\cite[\S7.B]{rockafellar2009variational} denoted by
``$\xrightarrow[]{\textnormal{e}}$''.

\begin{proposition}
  \label{pr:convergence_of_smooth_vf}
  We suppose that the discrete white noise
  Assumption~\ref{as:discrete_white_noise} holds true.
  Let $\na{\mu_n}_{n\in\NN} \in \np{\spRR}^\NN$
  be a nonincreasing sequence of positive real numbers such that
  $\lim_{n\to+\infty} \mu_n = 0$,
  let $\na{\RawSmoothValueFunction^{\mu_n}_{t}}_{t \in \ic{0,\horizon}, n\in\NN}$
  be lower smooth parametric value functions
  as defined in~\eqref{eq:smooth_value_functions},
  and let $\na{\ValueFunction_{t}}_{t \in \ic{0,\horizon}}$
  be the parametric value functions
  defined in~\eqref{eq:parametric_value_functions}.

  Under the discrete white noise Assumption~\ref{as:discrete_white_noise},
  the convex multistage problem Assumption~\ref{as:convexity}
  and the parameteric compacity Assumption~\ref{as:compacity},
  we have the following convergence property for all $t \in \ic{0,\horizon}$:
  \begin{equation}
      \RawSmoothValueFunction^{\mu_n}_{t}
      \xrightarrow[n\to+\infty]{\textnormal{e}} \ValueFunction_{t}
    \eqfinp
    \label{eq:epi_convergence}
  \end{equation}
\end{proposition}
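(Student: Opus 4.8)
The plan is to proceed by backward induction on the time index $t \in \ic{0,\horizon}$, establishing epiconvergence $\RawSmoothValueFunction^{\mu_n}_{t} \xrightarrow[]{\textnormal{e}} \ValueFunction_{t}$ at each stage. I would first record three structural facts that make the induction work. From Theorem~\ref{pr:lower_smooth_properties}(1), we have the monotone lower-bound chain $\SmoothValueFunction^{\mu_n}_t \leq \ValueFunction_t$; moreover, since the Moreau envelope increases as $\mu$ decreases, the nonincreasing sequence $\mu_n \to 0$ gives a \emph{nondecreasing} sequence $n \mapsto \SmoothValueFunction^{\mu_n}_t$ (at the final stage this is immediate from the envelope property, and it should propagate through the Bellman recursion by monotonicity of the operator $\inf_\control \EE[\cdots]$ in its cost and continuation arguments). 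For nondecreasing sequences of functions, \cite[\S7.B]{rockafellar2009variational} provides a clean criterion: the epilimit coincides with the pointwise (equivalently, lsc-regularized pointwise) supremum. Thus the main task reduces to identifying $\sup_n \SmoothValueFunction^{\mu_n}_t$ with $\ValueFunction_t$ pointwise.

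The base case $t = \horizon$ is the standard Moreau-envelope convergence: for the proper lsc convex function $\Final(\state,\cdot)$, one has $\Final^{\mu_n}(\state,\param) \uparrow \Final(\state,\param)$ pointwise as $\mu_n \downarrow 0$ \cite[Proposition~12.32 / 12.9]{Bauschke-Combettes:2017}, and for convex functions this monotone pointwise convergence upgrades to epiconvergence \cite[\S7]{rockafellar2009variational}. For the inductive step, assume $\SmoothValueFunction^{\mu_n}_{t+1} \xrightarrow[]{\textnormal{e}} \ValueFunction_{t+1}$. I would pass the epiconvergence through the Bellman operator in two stages. First, handle the regularized stage cost: Lemma~\ref{le:smooth_cost} together with the parametric compacity Assumption~\ref{as:compacity} ensures $\coutint_t^{\mu_n}(\state,\control,\uncertain,\cdot) \uparrow \coutint_t(\state,\control,\uncertain,\cdot)$, again an increasing convex pointwise-to-epi convergence. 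Second, combine the convergence of the costs with that of $\SmoothValueFunction^{\mu_n}_{t+1}$ and push both through the finite expectation (a finite convex combination over $\Support{\rvUncertain_{t+1}}$, which preserves epiconvergence for increasing sequences) and through the affine change of variables $\control \mapsto \dynamics_t(\state,\control,\uncertain)$.

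The delicate step is commuting epiconvergence with the infimum over the control $\control \in \CONTROL$ that defines the marginal value function. In general epiconvergence of integrands does not transfer to marginals, but here the uniform compact-support condition of Definition~\ref{de:compacity} (available since the costs lie in $\Gamma_\mathcal{K}$, hence their envelopes in $\Theta_\mathcal{K}$) confines every relevant infimum to a fixed compact $\mathcal{K} \subset \CONTROL$, as exploited in Equation~\eqref{eq:Bphi-lsc}. With the minimization effectively over a compact set and the integrand sequence nondecreasing with pointwise-sup equal to the limit integrand, I expect to invoke the interchange result for epiconvergence under inf-projection onto a compact set \cite[Theorem~7.33 / Proposition~7.32]{rockafellar2009variational}, yielding $\inf_{\control \in \mathcal{K}} \SmoothQ^{\mu_n}_t(\state,\control,\param) \to \inf_{\control \in \mathcal{K}} \QFunction_t(\state,\control,\param)$ in the epi sense and thereby $\SmoothValueFunction^{\mu_n}_t \xrightarrow[]{\textnormal{e}} \ValueFunction_t$. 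I anticipate this compact-domain interchange to be the principal technical obstacle; the monotonicity of the sequence is the lever that makes it tractable, since for nondecreasing sequences the epilimit and the pointwise-sup agree and the usual liminf/limsup inequalities for marginal functions collapse into a single identity.
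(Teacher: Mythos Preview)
Your proposal is correct and follows essentially the same route as the paper: both proceed by backward induction, exploit the monotonicity of the sequence $n\mapsto\RawSmoothValueFunction^{\mu_n}_t$ (together with lower semicontinuity from Theorem~\ref{pr:lower_smooth_properties}) to reduce epiconvergence to pointwise convergence via \cite[Proposition~7.4(d)]{rockafellar2009variational}, use Moreau-envelope convergence for the base case, and at the inductive step pass the limit through the infimum over~$\control$ by invoking the eventually-level-bounded criterion from the $\Gamma_\mathcal{K}$ compactness and \cite[Theorem~7.33]{rockafellar2009variational}. The only cosmetic difference is that the paper first establishes epiconvergence of $\RawSmoothQFunction^{\mu_n}_t(\state,\cdot,\param)$ in the control variable (for fixed $(\state,\param)$) and then reads off scalar convergence of infima, whereas you phrase the target as epiconvergence of the marginal in $(\state,\param)$; since monotonicity makes pointwise and epi convergence equivalent here, the two formulations coincide.
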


\begin{proof}
  We know by Theorem~\ref{th:lower_smooth_properties} that,
  for any $n \in \NN$, the functions
  $\na{\RawSmoothValueFunction^{\mu_n}_{t}}_{t\in\ic{0,\horizon}}$
  are lsc. Moreover, for each~$t \in \ic{0,\horizon}$,
  $\na{\RawSmoothValueFunction^{\mu_n}_{t}}_{n\in\NN}$ is a nondecreasing
  sequence of functions since $\na{\mu_n}_{n\in\NN}$
  is a nonincreasing sequence of positive real numbers. This ensures
  the equivalence between pointwise convergence and epiconvergence
  for the sequence $\na{\RawSmoothValueFunction^{\mu_n}_{t}}_{n\in\NN}$
  by~\cite[Proposition~7.4(d)]{rockafellar2009variational}, so that it is
  sufficient to prove the pointwise convergence of the sequence
  $\na{\RawSmoothValueFunction^{\mu_n}_{t}}_{n\in\NN}$
  to obtain~\eqref{eq:epi_convergence}:
  \begin{equation}
    \RawSmoothValueFunction^{\mu_n}_{t}
    \xrightarrow[n\to+\infty]{\textsc{p}} \ValueFunction_{t}
    \eqfinv
    \label{eq:pointwise_convergence}
  \end{equation}

  We proceed by backward induction.

  $\bullet$ We start by proving~\eqref{eq:pointwise_convergence} at
  stage~$\horizon$.  From~\eqref{eq:smooth_final},
  $\RawSmoothValueFunction^{\mu_{n}}_\horizon = \Final^{\mu_n}$ for all
  $n \in \NN$, where for $\state \in \STATE$, $\Final^{\mu_n}(\state, \cdot)$ is defined
  in~\eqref{eq:my_final_cost} as the Moreau envelope of $\Final(\state, \cdot)$.  By
  Assumption~\ref{as:convexity}, the function~$\Final$ is proper and belongs to
  $\Gamma[\STATE, \PARAM]$.  We consider two cases.  Either
  $\Final(\state, \cdot) = +\infty$ , in which case
  $\RawSmoothValueFunction^{\mu_{n}}_\horizon(\state \sep \cdot) =
  \Final^{\mu_n}(\state, \cdot) = +\infty$ for all $n \in \NN$ and obviously pointwise
  converges to $\Final(\state, \cdot)$.  Or $\Final(\state, \cdot)$ is proper, in which
  case
  $\RawSmoothValueFunction^{\mu_{n}}_\horizon(\state \sep \cdot) =
  \Final^{\mu_n}(\state, \cdot)$ converges pointwise to $\Final(\state, \cdot)$ and is a
  nondecreasing sequence, from the properties of the Moreau envelope (see
  \cite[Proposition~12.33]{Bauschke-Combettes:2017}).  This
  proves~\eqref{eq:pointwise_convergence} at final stage~$\horizon$.

  $\bullet$ Now, fix $t \in \ic{0, \horizon-1}$, and assume that
  the statement~\eqref{eq:pointwise_convergence} holds true at stage $t{+}1$.
  For any $n \in \NN$ and for any
  $\np{\state, \control, \param}
  \in \STATE \times \CONTROL \times \PARAM$,
  we define the lower smooth $Q$-function~$\RawSmoothQFunction_t^{\mu_n}$ by
  \begin{equation*}\label{eq:proof_QFunction_smooth}
    \RawSmoothQFunction_t^{\mu_n}(\state, \control \sep \param)
    =
    \EE
    \Bc{\StageCost_{t}^{\mu_n}(\state, \control, \rvUncertain_{t+1}, \param)
      + \RawSmoothValueFunction_{t+1}^{\mu_n}
      \bp{\dynamics_t\np{\state, \control, \rvUncertain_{t+1}} \sep \param}}
    \eqfinp
  \end{equation*}

  As a first step, we show that the sequence of functions
  $\na{\RawSmoothQFunction^{\mu_n}_t(\state, \cdot \sep \param)}_{n\in\NN}$
  epiconverges to $\QFunction_{t}(\state, \cdot \sep \param)$,
  with $\QFunction_{t}$ defined in~\eqref{eq:Q_function_expression}.
  From the properties of the Moreau envelope,
  the sequence $\na{\StageCost_t^{\mu_n}}_{n\in\NN}$ in~\eqref{eq:my_stage_cost}
  is nondecreasing and, from Lemma~\ref{le:smooth_cost},
  for any $\uncertain \in \Support{\rvUncertain_{t+1}}$,
  each function
  $\StageCost_t^{\mu_n}(\cdot, \cdot, \uncertain, \cdot)$ is lsc.
  Similarly, by assumption,
  the sequence $\na{\RawSmoothValueFunction_{t+1}^{\mu_n}}_{n\in\NN}$
  is nondecreasing and, from Theorem~\ref{th:lower_smooth_properties},
  each function $\RawSmoothValueFunction_{t+1}^{\mu_n}$ is lsc.
  It follows that
  $\na{\RawSmoothQFunction^{\mu_n}_t(\state, \cdot \sep \param)}_{n\in\NN}$
  is a nondecreasing sequence and that each function
  $\RawSmoothQFunction^{\mu_n}_t$ is lsc, since the expectation above
  is a finite sum.
  Therefore, from \cite[Proposition~7.4(d)]{rockafellar2009variational},
  we obtain the epiconvergence
  \begin{equation*}
    \RawSmoothQFunction^{\mu_n}_t(\state, \cdot \sep \param)
    \xrightarrow[n\to+\infty]{\textnormal{e}}
    \sup_{n \in \NN} \bp{\text{lsc} \bp{\RawSmoothQFunction^{\mu_n}_t}(\state, \cdot \sep \param)}
    =\sup_{n \in \NN} \bp{\RawSmoothQFunction^{\mu_n}_t(\state, \cdot \sep \param)}
    \eqfinp
  \end{equation*}
  Moreover, the nondecreasing sequences
  $\na{\StageCost_t^{\mu_n}}_{n\in\NN}$
  and $\na{\RawSmoothValueFunction_{t+1}^{\mu_n}}_{n\in\NN}$ of functions
  converge pointwise respectively to
  the function~$\StageCost_{t}$,
  using the properties of the
  Moreau envelope (see \cite[Proposition~12.33(ii)]{Bauschke-Combettes:2017}),
  and to $\ValueFunction_{t+1}$, using the backward induction assumption on
  $\na{\RawSmoothValueFunction_{t+1}^{\mu_n}}_{n\in\NN}$.
  It follows that
  $\sup_{n \in \NN}
  \bp{\RawSmoothQFunction^{\mu_n}_t(\state, \cdot \sep \param)}
  = \QFunction_{t}(\state, \cdot \sep \param)$.	

  As a second step, we show that~\eqref{eq:pointwise_convergence} holds at stage $t$.
  For that purpose we fix $(x,p)\in \STATE{\times}\PARAM$ and consider the sequence of mappings
  $\na{\RawSmoothQFunction^{\mu_n}_t(\state, \cdot \sep \param)}_{n\in\NN}$.
  Since $\domain \np{\RawSmoothQFunction^{\mu_n}_t(\state, \cdot \sep \param)}
  \subseteq \cap_{\uncertain\in\Support{\rvUncertain_{t+1}}}
  \domain \coutint_t^{\mu_n}(\state, \cdot, \uncertain, \param)$
  ---
  where $\domain \coutint_t^{\mu_n}(\state, \cdot, \uncertain, \param)$ is bounded
  as $\coutint_t^{\mu_n}(\cdot, \cdot, \uncertain, \cdot) \in \Gamma_\mathcal{K}[\STATE\times\CONTROL, \PARAM]$
  ---
  the sequence $\na{\RawSmoothQFunction^{\mu_n}_t(\state, \cdot \sep \param)}_{n\in\NN}$ of functions is
  \textit{eventually level-bounded}
  (see \cite[Exercice~7.32(a)]{rockafellar2009variational}).
  Therefore, by application of
  \cite[Theorem~7.33]{rockafellar2009variational},
  we obtain the convergence of the infima, that is,
  \begin{equation*}
    \inf_{\control \in \CONTROL}
    \RawSmoothQFunction^{\mu_n}_t(\state, \control \sep \param)
    \xrightarrow[n \to +\infty]{}
    \inf_{\control \in \CONTROL}
    \QFunction_t(\state, \control \sep \param)
    \eqfinp
  \end{equation*}
  Thus, we obtain the pointwise convergence of the sequence
  $\na{\RawSmoothValueFunction_{t}^{\mu_n}}_{n\in \NN}$ as,
  for any fixed value $(x,p)\in \STATE\times\PARAM$, we have that
  \[
    \RawSmoothValueFunction_{t}^{\mu_n}(\state \sep \param)= \inf_{\control \in \CONTROL}
  \RawSmoothQFunction^{\mu_n}_t(\state, \control \sep \param)
  \xrightarrow[n \to +\infty]{}
  \inf_{\control \in \CONTROL}
  \QFunction_t(\state, \control \sep \param) =
  \ValueFunction_{t}(\state \sep \param)
  \eqfinp
  \]
  Moreover, for all $n \in \NN$, we have that $\RawSmoothValueFunction^{\mu_{n}}_t
  \leq \RawSmoothValueFunction^{\mu_{n+1}}_t$ since the sequence
  $\na{\RawSmoothQFunction^{\mu_n}_t(\state, \cdot \sep \param)}_{n\in\NN}$
  is nondecreasing.
  We conclude that the statement~\eqref{eq:pointwise_convergence}
  holds at time $t$. This ends the proof.
\end{proof}

As a consequence, we obtain the following corollary.

\begin{corollary}
\label{co:convergence_of_inf}
  Under the assumptions of Proposition~\ref{pr:convergence_of_smooth_vf},
  let $\state_{0}$ be the initial state in~\eqref{eq:constraints_begin},
  let the set $\ParamAd$ be compact,
  and let $\Phi\opt = \inf_{\param \in \ParamAd} \Phi(\param)$
  be the optimal value of
  Problem~\eqref{eq:parametric_multistage_problem}. Then,
  we have that
  \begin{equation}
    \inf_{\param \in \ParamAd} \RawSmoothValueFunction^{\mu_n}_{0}(\state_{0} \sep \param)
    \leq \Phi\opt \eqsepv
    \forall n \in \NN \eqsepv
    \text{and} \quad
    \inf_{\param \in \ParamAd} \RawSmoothValueFunction^{\mu_n}_{0}(\state_{0} \sep \param)
    \xrightarrow[n\to+\infty]{} \Phi\opt
    \eqfinp
  \end{equation}
\end{corollary}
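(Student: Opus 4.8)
The plan is to treat the two assertions separately: the uniform bound $\inf_{\param\in\ParamAd}\RawSmoothValueFunction^{\mu_n}_{0}(\state_0,\param)\le\Phi\opt$ will follow from the pointwise lower-bound property of Theorem~\ref{pr:lower_smooth_properties}, while the convergence will follow from the epiconvergence established in Proposition~\ref{pr:convergence_of_smooth_vf} together with the compactness of~$\ParamAd$.

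First I would establish the inequality. For every $n\in\NN$ and every $\param\in\PARAM$, Inequality~\eqref{eq:smooth_lower_bound} of Theorem~\ref{pr:lower_smooth_properties} at stage~$0$, combined with the identity $\ValueFunction_0(\state_0,\cdot)=\Phi$ from~\eqref{eq:Phi_value_function}, gives $\RawSmoothValueFunction^{\mu_n}_{0}(\state_0,\param)\le\ValueFunction_0(\state_0,\param)=\Phi(\param)$. Taking the infimum over $\param\in\ParamAd$ yields $\inf_{\param\in\ParamAd}\RawSmoothValueFunction^{\mu_n}_{0}(\state_0,\param)\le\Phi\opt$ for all~$n$, which is the first claim. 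Moreover, since $\na{\mu_n}_{n\in\NN}$ is nonincreasing, the sequence $\na{\RawSmoothValueFunction^{\mu_n}_{0}}_{n\in\NN}$ is nondecreasing (as noted in the proof of Proposition~\ref{pr:convergence_of_smooth_vf}), so the infima form a nondecreasing sequence bounded above by~$\Phi\opt$, hence converging to some limit $\ell\le\Phi\opt$.

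To prove $\ell=\Phi\opt$, I would fold the constraint set into the objective. Since $\ParamAd$ is compact, hence closed, its indicator $\indicator_{\ParamAd}$ is lsc, so the augmented functions $g_n=\RawSmoothValueFunction^{\mu_n}_{0}(\state_0,\cdot)+\indicator_{\ParamAd}$ are lsc (by Theorem~\ref{pr:lower_smooth_properties}), nondecreasing in~$n$, and converge pointwise to $\Phi+\indicator_{\ParamAd}$ (by Proposition~\ref{pr:convergence_of_smooth_vf} and~\eqref{eq:Phi_value_function}). Exactly as in the proof of Proposition~\ref{pr:convergence_of_smooth_vf}, \cite[Proposition~7.4(d)]{rockafellar2009variational} then delivers the epiconvergence $g_n\xrightarrow[n\to+\infty]{\textnormal{e}}\Phi+\indicator_{\ParamAd}$. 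I would conclude with \cite[Theorem~7.33]{rockafellar2009variational}: the eventual level-boundedness it requires is immediate, since $\dom g_n\subseteq\ParamAd$ lies in the fixed compact set~$\ParamAd$, so every sublevel set of every~$g_n$ is bounded; feasibility (Assumption~\ref{as:convexity}) guarantees $\ParamAd\cap\dom\Phi\neq\emptyset$, which makes $\Phi+\indicator_{\ParamAd}$ and each~$g_n$ proper with finite infimum. The theorem then gives $\inf g_n\to\inf\np{\Phi+\indicator_{\ParamAd}}$, that is, $\inf_{\param\in\ParamAd}\RawSmoothValueFunction^{\mu_n}_{0}(\state_0,\param)\to\Phi\opt$, forcing $\ell=\Phi\opt$.

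The main obstacle is that the epiconvergence furnished by Proposition~\ref{pr:convergence_of_smooth_vf} holds over the whole space~$\PARAM$, whereas the corollary minimizes over the constraint set~$\ParamAd$; the remedy is precisely to absorb~$\ParamAd$ into the objective through its (lsc, because $\ParamAd$ is closed) indicator and to re-derive epiconvergence for the augmented sequence, after which eventual level-boundedness comes for free from compactness. Should one prefer to avoid this abstract machinery, the same conclusion follows from a direct Dini-type argument: each lsc function~$g_n$ attains its minimum at some $\param_n\in\ParamAd$, a convergent subsequence $\param_{n_k}\to\bar\param\in\ParamAd$ exists by compactness, and for each fixed~$m$ monotonicity and lower semicontinuity give $g_m(\bar\param)\le\liminf_k g_m(\param_{n_k})\le\ell$; letting $m\to+\infty$ yields $\Phi(\bar\param)\le\ell$, whence $\Phi\opt\le\Phi(\bar\param)\le\ell\le\Phi\opt$.
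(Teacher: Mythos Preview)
Your proof is correct and follows essentially the same route as the paper's: absorb $\ParamAd$ into the objective via its indicator, establish epiconvergence of the augmented sequence, observe eventual level-boundedness from compactness, and conclude with \cite[Theorem~7.33]{rockafellar2009variational}. The differences are minor and mostly cosmetic: the paper works on $\STATE\times\PARAM$ with the indicator $\indicator_{\{\state_0\}\times\ParamAd}$ and invokes \cite[Proposition~7.46]{rockafellar2009variational} to propagate epiconvergence through the sum, whereas you fix $\state_0$ first and re-derive epiconvergence directly from monotonicity via \cite[Proposition~7.4(d)]{rockafellar2009variational}, which is arguably cleaner since it reuses the same tool as in the proof of Proposition~\ref{pr:convergence_of_smooth_vf}; you derive the inequality $\inf_{\ParamAd}\RawSmoothValueFunction^{\mu_n}_0(\state_0,\cdot)\le\Phi\opt$ straight from the lower bound~\eqref{eq:smooth_lower_bound}, while the paper obtains it a posteriori from the monotonicity of the infima; and your alternative Dini-type argument is a nice self-contained bonus that avoids the variational-analysis machinery altogether.
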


\begin{proof}
  From Proposition~\ref{pr:convergence_of_smooth_vf}, the sequence
  $\na{\RawSmoothValueFunction_0^{\mu_n}}_{n\in\NN}$ of functions converges
  pointwise and epiconverges to $\ValueFunction_{0}$
  in~\eqref{eq:parametric_value_functions}.  Moreover, the
  function~$\indicator_{\na{\state_{0}}\times\ParamAd}$ is lsc.  It follows from
  \cite[Proposition~7.46]{rockafellar2009variational} that the sequence
  $\na{\RawSmoothValueFunction_0^{\mu_n} +
    \indicator_{\na{\state_{0}}\times\ParamAd}}_{n\in\NN}$ of functions epiconverges to
  $\ValueFunction_{0} + \indicator_{\na{\state_{0}}\times\ParamAd}$.  Then, as the
  effective domain
  $\domain \np{\RawSmoothValueFunction_0^{\mu_n} +
    \indicator_{\na{\state_{0}}\times\ParamAd}} \subseteq \na{\state_{0}}\times\ParamAd$ is
  bounded, the sequence
  $\na{\RawSmoothValueFunction_0^{\mu_n} +
    \indicator_{\na{\state_{0}}\times\ParamAd}}_{n\in\NN}$ is \textit{eventually
    level-bounded} (see \cite[Exercice~7.32(a)]{rockafellar2009variational}).
  Therefore, by application of \cite[Theorem~7.33]{rockafellar2009variational},
  we obtain the convergence of the infimum
  \begin{equation*}
    \inf_{\param \in \ParamAd}
    \RawSmoothValueFunction^{\mu_n}_{0}(\state_{0} \sep \param)
    \xrightarrow[n\to+\infty]{}
    \inf_{\param \in \ParamAd}
    \ValueFunction_0(\state_{0} \sep \param)
    =
    \Phi\opt \eqfinp
  \end{equation*}
  Then, for any $n \in \NN$, the inequality
  $\inf_{\param \in \ParamAd} \RawSmoothValueFunction^{\mu_n}_{0}(\state_{0} \sep
  \param) \leq \Phi\opt$ follows from
  $\RawSmoothValueFunction^{\mu_n}_{0} \leq \RawSmoothValueFunction^{\mu_{n+1}}_{0}$ as
  shown in the proof of Proposition~\ref{pr:convergence_of_smooth_vf}.
\end{proof}

\begin{remark}
  In Corollary~\ref{co:convergence_of_inf}, by application of
  \cite[Theorem~7.33]{rockafellar2009variational}, we also obtain that all
  accumulation points of a sequence $\na{\param_n}_{n\in \NN}$ satisfying
  $\param_n \in \argmin_{\param \in \Param}
  \RawSmoothValueFunction^{\mu_n}_{0}(\state_{0} \sep \param)$ for $n \in \NN$ is a
  solution of $\inf_{\param \in \ParamAd} \Phi(\param)$
  in~\eqref{eq:parametric_multistage_problem}.  In particular, if
  Problem~\eqref{eq:parametric_multistage_problem} has a unique solution
  $\param\opt \in \ParamAd$, then $\param_n \xrightarrow[n\to+\infty]{} \param\opt$.
\end{remark}

\section{A First-Order Optimization Method Based on Recursive Gradient Computation}
\label{sec:experimental_assessment_based_on_SDDP}

Solving the PMSOP~\eqref{eq:parametric_multistage_problem} amounts to compute an
optimal parameter
\begin{equation}
  \param^\sharp \in \argmin_{\param \in \ParamAd} \Phi(\param) \eqfinp
\end{equation}
First, in~\S\ref{sec:estimation_Phi}, we introduce a numerical assessment method
that we use to estimate accurately the value $\Phi(\param\opt)$ for any candidate
PMSOP solution $\param\opt \in \ParamAd$.  Second,
in~\S\ref{sec:first_order_methods}, we introduce
a 
first-order optimization method based on recursive gradient computation for
solving the PMSOP~\eqref{eq:parametric_multistage_problem}.  In this method,
gradients $\nabla\Phi$ are computed thanks to the backward
induction~\eqref{eq:backward_induction} established in
Theorem~\ref{th:smooth_convex_problem}.  We also discuss
in~\S\ref{sec:first_order_methods_SDDP} the case of an alternative SDDP-based
first-order method that we introduce for comparison purposes.

\subsection{Numerical Assessment Method}
\label{sec:estimation_Phi}

Given a candidate optimal solution $\param\opt \in \ParamAd$
for the PMSOP~\eqref{eq:parametric_multistage_problem},
we need to compute an accurate estimation of $\Phi(\param\opt)$
to assess the quality of $\param\opt$.
For this purpose, we choose to rely on the Stochastic Dual Dynamic Programming
algorithm (SDDP, \cite{pereira1991multi}), with the original state, as it
provides both a lower bound and an upper bound for
$\Phi(\param\opt) = \ValueFunction_0(\state_0 \sep \param\opt)$.

First, we compute polyhedral lower approximations
$\na{\underline{\ValueFunction}_t\np{\cdot \sep \param\opt}}_{t \in \ic{0,\horizon}}$
of the parametric value functions
$\na{\ValueFunction_t\np{\cdot \sep \param\opt}}_{t \in \ic{0,\horizon}}$ with the
SDDP algorithm.
In this application of SDDP, we stress that the state space
is $\STATE$ and that
the parameter is fixed to the value
$\param\opt$ --- hence left apart from the state.

Second,
we use the resulting policy
\begin{align}
  \label{eq:policy}%
  \pi_t(x; \param\opt)
  &\in
    \argmin_{\control \in \ConstraintSet_t(\state)}
    \EE
    \Bc{\StageCost_{t}
    (\state, \control, \rvUncertain_{t+1} \sep \param\opt)
    + \underline{\ValueFunction}_{t+1}
    \bp{\dynamics_t\np{\state, \control, \rvUncertain_{t+1}}; \param\opt}}
    \eqsepv
  \\
  & \hspace{6cm}
    \forall \state \in \STATE \eqsepv
    \forall t \in \ic{0, \horizon-1}
    \eqfinv \nonumber
\end{align}
to compute the expected simulation cost
\begin{equation}
  \label{eq:upper_V}%
  \overline{\ValueFunction}_{0}(\state_0 \sep \param\opt)
  =
  \EE
  \Bc{\sum_{t=0}^{\horizon-1}
    \StageCost_{t}
    \bp{\rvState_t, \pi_t(\rvState_t; \param\opt), \rvUncertain_{t+1} \sep \param\opt}
    + \Final(\rvState_\horizon \sep \param\opt)}
  \eqfinp
\end{equation}
Since SDDP provides polyhedral lower estimates of the (true) parametric value
functions
$\na{\ValueFunction_{t}\np{\cdot \sep \param\opt}}_{t \in \ic{0,\horizon}}$ defined
by~\eqref{eq:parametric_value_functions}, and since
$\na{\pi_t}_{t\in\ic{0,\horizon-1}}$ in~\eqref{eq:policy} is a suboptimal policy for
Problem~\eqref{eq:parametric_multistage_problem}, we have the inequality
\begin{equation}
  \underline{\ValueFunction}_{0}(\state_0 \sep \param\opt)
  \leq
  \Phi(\prof\opt)
  \leq
  \overline{\ValueFunction}_{0}(\state_0 \sep \param\opt) \eqfinp
  \label{eq:estimating_bounds}%
\end{equation}

In practice, we compute
$\underline{\ValueFunction}_{0}(\state_0 \sep \param\opt)$ by running
$\maxiter \in \NN$ forward-backward passes of the SDDP algorithm, and the
expectation for $\overline{\ValueFunction}_{0}(\state_0 \sep \param\opt)$
in~\eqref{eq:upper_V} is computed by Monte-Carlo simulation, generating
scenarios with the discrete probability distributions of the noise variables
$\sequence{\rvUncertain_t}{t \in \ic{1, \horizon}}$ in~\eqref{eq:noise_process}.

\subsection{A First-Order Optimization Method Based on Recursive Gradient Computation for Solving a PMSOP}
\label{sec:first_order_methods}

Our main algorithmic contribution 
is to introduce a gradient-based first-order optimization method for solving a
PMSOP.

The method is showcased in Algorithm~\ref{alg:first-order}.  Starting from an
initial parameter $\param^0 \in \ParamAd$, we run at most $N \in \NN$ iterative
steps (outer loop), where, at each step, we call a first-order oracle
$\param \mapsto \bp{\ValueFunction_0(\state_0 \sep \param), \nabla_\prof
  \ValueFunction_0(\state_0 \sep \param)}$ based on the backward
induction~\eqref{eq:backward_induction} established in
Theorem~\ref{th:smooth_convex_problem} (inner loop). The output of the oracle is
used so as to update the current parameter value $\param^i$ to $\param^{i+1}$
according to a first-order update rule \cite{beck2017first}.  When the algorithm
stops, a candidate solution $\param\opt \in \ParamAd$ is returned.

\begin{algorithm}[H]
  \textbf{input:} $\param^0 \in \ParamAd$ \\
  \For{$i \in \ic{0, N{-}1}$}{
    $\blacktriangleright$
    compute
    $\bp{\ValueFunction_0(\state_0 \sep \param^i),
      \nabla_\param \ValueFunction_0(\state_0 \sep \param^i)}$ based on the backward induction~\eqref{eq:backward_induction}
    \ $(\bigstar)$ \\
    $\blacktriangleright$ update
    $\param^i$ to $\param^{i+1}$ using a first-order update rule
    and the output of the oracle \\
    $\blacktriangleright$ check if a stopping
    criterion is satisfied
  }
  \textbf{output:} $\param\opt \in \ParamAd$ \\
  \caption{Gradient-based first-order optimization method for a PMSOP}
  \label{alg:first-order}%
\end{algorithm}

When the parametric differentiability Assumption~\ref{as:smoothness} is not
satisfied, we cannot apply Theorem~\ref{th:smooth_convex_problem}.  Instead, we
resort to the regularization process introduced in~\S\ref{sec:lower_smooth}: we
select a regularization coefficient $\mu > 0$ and call the first-order oracle
$\param \mapsto \bp{\SmoothValueFunction_0(\state_0 \sep \param), \nabla_\param
  \SmoothValueFunction_0(\state_0 \sep \param)}$ based on the backward
induction~\eqref{eq:backward_induction_Moreau} of
Theorem~\ref{th:lower_smooth_properties} during phase $(\bigstar)$ of
Algorithm~\ref{alg:first-order}.

\begin{remark}
  In Algorithm~\ref{alg:first-order}, we intentionally keep the choice of a
  first-order update rule open. In fact, any smooth convex optimization
  algorithm based on the gradient of the objective function $\Phi$ can be
  used. Note that this includes quasi-Newton algorithms.
\end{remark}

\subsection{An Alternative First-Order Method Based on SDDP with
  Extended State}
\label{sec:first_order_methods_SDDP}

As an alternative to the gradient-based first-order method outlined in
Algorithm~\ref{alg:first-order}, we propose to replace the first-order oracle of
phase $(\bigstar)$ by an oracle
$\param \mapsto \bp{\PolyhedralValueFunctionbis_0^{\maxiter}\bp{(\state_0, \param)}, q
  \in \partial_\prof \PolyhedralValueFunctionbis_{0}^{\maxiter}\bp{(\state_0, \param)}}$
built on SDDP with extended\footnote{%
  This is why, here, value functions are denoted by
  \( \ValueFunction_{t}\bp{(\state, \param)} \) and not by
  \( \ValueFunction_{t}\np{\state \sep \param} \).}  state
$\statebis = (\state, \param)$ as in~\eqref{eq:extended_dynamics}.  This oracle
relies on polyhedral lower approximation
$\PolyhedralValueFunctionbis_{0}^{\maxiter}$ of the parametric value function
$\ValueFunction_0$ in~\eqref{eq:parametric_value_functions} computed by running
$\maxiter$ forward-backward passes of SDDP.

We stress that, in that case, the state variable needs to be extended to
$\statebis = (\state, \param)$ together with a trivial stationary dynamics for
its parameter component~$\param$, as explained in~\eqref{eq:extended_dynamics}.
Indeed, with such a state extension, subgradients in the extended dual space
$\STATE \times \PARAM$ are computed during the backward passes of SDDP.  Thus, a
subgradient
$q \in \partial_\prof \PolyhedralValueFunctionbis_{0}^{\maxiter}\bp{(\state_0,
  \param^i)}$ can be computed in phase $(\bigstar)$, at each step
$i \in \ic{1,N}$ of the outer loop of Algorithm~\ref{alg:first-order}.  We recall
that this state extension is expected to affect the numerical performance of
SDDP: we refer to \cite{shapiro2011analysis} for further details on the SDDP
algorithm.

Lastly, we also notice that this SDDP-based method is built on an oracle which
returns subgradients (and not gradients as in the method we have developed
in~\S\ref{sec:first_order_methods}).  It follows that the choice of a
first-order update rule is restricted to nonsmooth methods \cite{beck2017first}.

\section{An Example in Day-Ahead Power Scheduling}
\label{sec:day_ahead_problem}

We now consider a numerical example for the purpose of
testing algorithms
for the numerical solution of a PMSOP~\eqref{eq:parametric_multistage_problem}.

First, in~\S\ref{sec:day_ahead_problem_statement},
we introduce a PMSOP that represents the
minimization of the expected intraday management cost
of a solar plant.
Second, in~\S\ref{sec:day_ahead_problem_results},
we showcase the numerical results obtained. 

\subsection{Problem Statement}
\label{sec:day_ahead_problem_statement}

We introduce a PMSOP as defined in~\eqref{eq:parametric_multistage_problem} for
the daily management of a solar plant.  Our example is inspired by the French
regulation for day-ahead power scheduling of renewable units in islanded and
overseas territories~\cite{CRE}.

The schematic organization of the solar plant is given in Figure~\ref{fig:unit}.
The main components of the power unit are a DC/AC inverter, a solar panel of
installed peak power $\peak \in \spRR$ (MW), and a lithium-ion battery
characterized by the coefficients $\battery$ referring respectively to the
battery's capacity (MWh), minimum load (MW), maximum load (MW), charge and
discharge efficiency coefficients.

\begin{figure}[htpb]
  \begin{subfigure}[b]{0.55\linewidth}
    \centering
    \includegraphics[width=\linewidth]{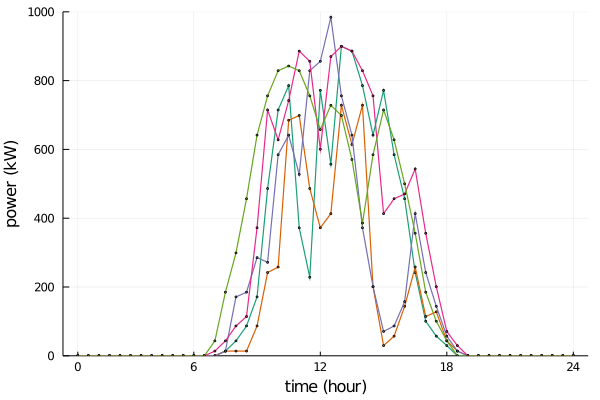}
    \caption{Example of daily generated power scenarios}
    \label{fig:ausgrid_data}
  \end{subfigure}
  \hfill
  \begin{subfigure}[b]{0.4\linewidth}
    \centering

    \begin{tikzpicture}

      \node (bus) at (0, 0) {\includegraphics[width=0.8cm]{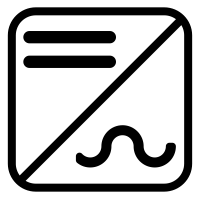}};
      \node (solar) at (-2, 1) {\includegraphics[width=0.8cm]{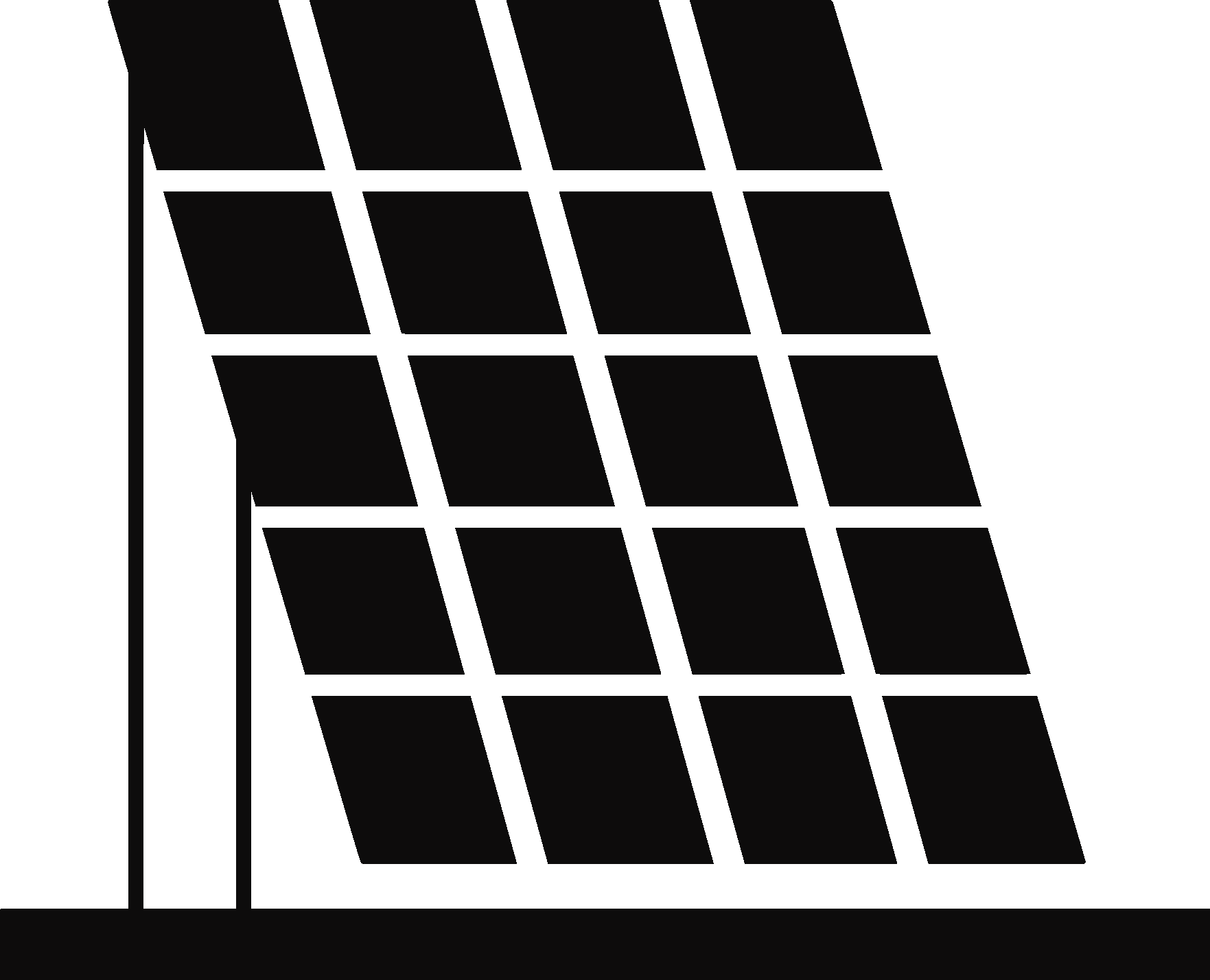}};
      \node (bat) at (-2, -1) {\includegraphics[width=0.8cm]{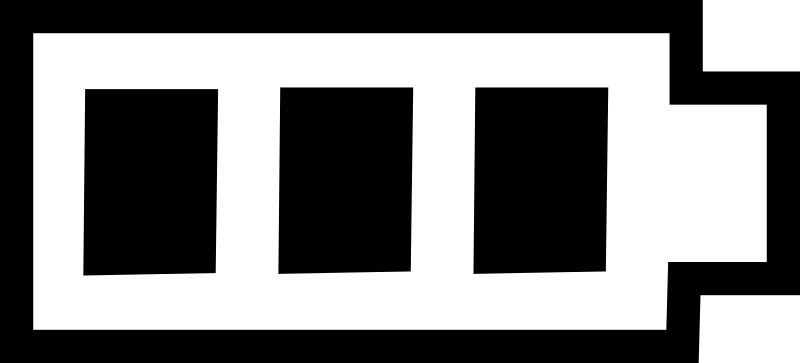}};
      \node (grid) at (2, 0) {\includegraphics[width=0.8cm]{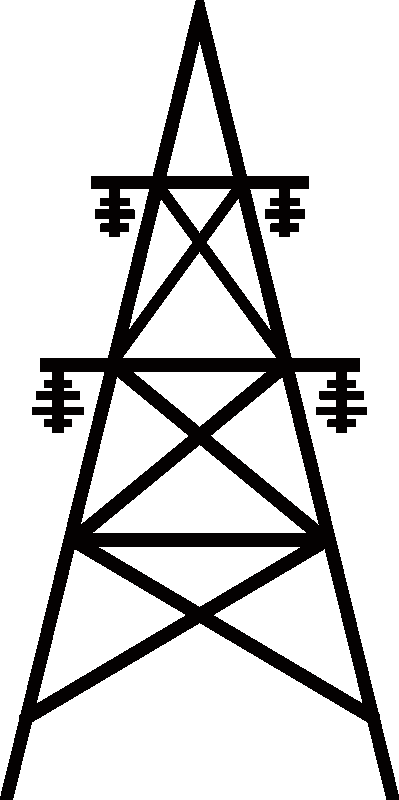}};

      \draw[->, >=latex, thick] (solar) to[bend left] node[above]{$\state^\gen_t$} (bus) ;
      \draw[<->, >=latex, thick] (bat) node[left=0.5cm]{$\state^\soc_t$} to[bend right] node[below]{$\control_t$} (bus) ;
      \draw[->, >=latex, thick] (bus) to node[above]{$\del_t$} (grid) ;

    \end{tikzpicture}

    \vspace{1cm}

    \caption{Schematic power plant}
    \label{fig:unit}
  \end{subfigure}
  \caption{Description of the solar power plant: (a) examples of
    daily generated power scenarios (data from~\cite{ausgrid})
    and (b) schematic organization}
  \label{fig:delta_gain}%
\end{figure}

We consider the time span of one operating day,
with time intervals of length $\Delta_t = 30$ minutes,
hence a problem horizon of $\horizon = 48$.
We now introduce all components of a dynamical system
to formalize our management problem.

\subsubsubsection{Variables and Parameter}

We introduce the state variable
\begin{align}
  \state_t
  &=
    \left(\begin{array}{l}
      \state_t^\soc \\[0.1cm]
      \state_t^\gen
    \end{array}\right)
    \in \RR^2 \eqsepv \forall t \in \ic{0, \horizon} \eqfinv
    \label{eq:state}
\end{align}
where
$\state_t^\soc \in \nc{0, 1}$ is the state of charge of the battery
and $\state_t^\gen \in \nc{0, \peak}$ is the generated power of the
solar panels,
both observed at stage $t \in \ic{0, \horizon}$.
The control
\begin{equation}
  \control_t \in \nc{\minBat, \maxBat} \eqsepv \forall t \in \ic{0, \horizon-1} \eqsepv
  \label{eq:control}
\end{equation}
taken at the beginning of every time interval~$[t,t+1[$,
accounts for the charging power ($\control_t \geq 0$)
or discharging power ($\control_t \leq 0$)
applied to the battery during~$[t,t+1[$.
Lastly, we introduce the noise variable
\begin{equation}
  \uncertain_t \in \RR \eqsepv \forall t \in \ic{1, \horizon} \eqfinv
  \label{eq:noise}%
\end{equation}
to represent uncertainties in the evolution
of the generated power $\state_t^\gen$.
All in all,
the two dimensional state~$\state_t$
evolves according to the dynamics\footnote{%
  The initial value $\state^\gen_0 = 0$ reflects the absence of sun at midnight
  in our use case.}
\begin{subequations}			
  \begin{equation}
    \state_{0} =
    \left(
      \begin{array}{l}
        \state_0^\soc \\ 0
      \end{array}\right) \eqsepv
    \state_{t+1} = \dynamics_t(\state_t, \control_t, \uncertain_{t+1}) \eqsepv \forall t \in \ic{0, \horizon-1} \eqfinv
  \end{equation}
  whose components are given by\footnote{%
Later, by means of constraints on the control, we will ensure that the dynamics~\eqref{eq:dynamics_bat}
below preserve the state constraint $\state_t^\soc \in \nc{0, 1}$
for all time~$t$.
 With the second part of the dynamics~$\dynamics_t$ in~\eqref{eq:dynamics_toy},
we cannot ensure that $\state_t^\gen \in \nc{0, \peak}$ for all time~$t$
because the uncertainty~$\uncertain$ is additive.
  In numerical practice, we round values $\state^\gen \notin \nc{0,\peak}$
  by projecting them to $\nc{0,\peak}$.}
  \begin{align}
    \dynamics_t(\state, \control, \uncertain)
    &= \left(\begin{array}{l}
               \dynamics(\state^\soc, \control) \\
               \alpha_t \state^\gen + \beta_t + \uncertain
             \end{array}\right) \eqsepv
    \forall \np{\state, \control, \uncertain} \in
    \RR^2 \times \RR \times \RR \eqsepv \forall t \in \ic{0, \horizon-1} \eqfinv
    \label{eq:dynamics_toy}%
    \\
    \dynamics\np{\state^\soc, \control}
    &=
      \state^\soc + \Bp{\rho_c \control^+ - \frac{1}{\rho_d} \control^-}
      \frac{\Delta_t}{\kappa}
      \eqsepv \forall \np{\state^\soc, \control} \in \nc{0, 1}\times \RR
      \eqfinv
      \label{eq:dynamics_bat}%
  \end{align}
\end{subequations}
where
\( \control^+ = \max(0, \control) \)
and \( \control^- = \max(0, -\control) \);
$\dynamics$ in~\eqref{eq:dynamics_bat} is the dynamics of the state of charge
of the battery; $\np{\alpha_t, \beta_t}$ in~\eqref{eq:dynamics_toy}
are the weights of the linear dynamics (AR(1) model) of the generated power
at stage $t \in \ic{0, \horizon-1}$.

We model the uncertainty of the noise variable $\uncertain_t$ in~\eqref{eq:noise}
with a stochastic noise process
$\rvUncertain = \na{\rvUncertain_t}_{t \in \ic{1,\horizon}}$
as in~\eqref{eq:noise_process},
that we assume
to be stagewise independent
with finitely supported random variables $\rvUncertain_t$,
$t \in \ic{1,\horizon}$ ---
in line with the discrete white noise
Assumption~\ref{as:discrete_white_noise}---
and also introduce
a state process $\rvState = \na{\rvState_t}_{t \in \ic{0,\horizon}}$ as in~\eqref{eq:rvState} and a control process
$\rvControl = \na{\rvControl_t}_{t \in \ic{0,\horizon-1}}$ as
in~\eqref{eq:rvControl}

Lastly, the solar plant is engaged \emph{from the start} to deliver,
for each time interval $[t, t+1[$,
a certain value of committed power $\prof_t \in \RR$,
composing altogether the parameter
\begin{equation}
  \prof = \na{\prof_t}_{t \in \ic{0, \horizon-1}}
  \in \RR^\horizon \eqfinp
  \label{eq:committed_profile}
\end{equation}
Despite the fact that the parameter~$p$ is itself a temporal sequence
$p= \na{p_t}_{t\in\ic{0,T-1}} $, we stress that $p$ is an \emph{initial
  decision} which is made prior to the observation of any uncertain
outcome. Indeed, it models the amount of power to be delivered for each time
step $t\in\ic{0,T-1}$, a decision which is made day-ahead --- that is, at time
$t=0$.  Thus, as opposed to the controls $\na{\control_t}_{t\in\ic{0,T-1}}$, the
initial decisions $\na{p_t}_{t\in\ic{0,T-1}}$ cannot be constructed as state ---
or history --- feedbacks.

\subsubsubsection{Constraints and Costs}

Controls are constrained by the admissibility sets
\begin{equation}
  \ConstraintSet_t(\state) = \defset{\control \in \RR }%
  {\minBat \leq \control \leq \maxBat
    \mtext{ and } 0 \leq \dynamics\np{\state^\soc, \control} \leq 1}
  \eqsepv \forall \state \in \RR^2 \eqsepv \forall t \in \ic{0, \horizon-1} \eqfinp
  \label{eq:toy_admissibility_set}%
\end{equation}
Besides, given the stagewise independence
assumption on noises
and the expression of the state variable
$\state_t$ in~\eqref{eq:state},
the nonanticipativity constraint in~\eqref{eq:constraints_end}
can be reformulated
without loss of optimality \cite[\S4.4]{Carpentier-Chancelier-Cohen-DeLara:2015} as
\begin{equation}
  \sigma(\rvControl_t) \subseteq \sigma(\rvState_t)
  \eqsepv \forall t \in \ic{0, \horizon-1}
  \eqfinp
\end{equation}

Stage costs depend on 
the delivered power~$\del_{t+1}$ over the interval~$[t, t+1[$,
given by
\begin{equation}\label{eq:delivered_power}
  \del_{t+1} = \state^\gen_{t+1} - \control_t
  = \alpha_t \state^\gen_t + \beta_t + \uncertain_{t+1} - \control_t
  \in \RR \eqsepv
  \forall t \in \ic{0, \horizon-1} \eqfinp
\end{equation}
Thus, for
$t \in \ic{0, \horizon-1}$
and
$\np{\state, \control, \uncertain, \prof_t}
\in \RR^2\times \RR \times \RR \times \RR$,
we define stage costs as
\begin{subequations}
  \label{eq:overall_cost}%
  \begin{equation}
    \StageCost_t(\state, \control, \uncertain \sep \param)
    =
    \Energy_t(\alpha_t \state^\gen + \beta_t + \uncertain - \control) +
    \Penalty_t(\alpha_t \state^\gen + \beta_t + \uncertain - \control, \prof_t)
    \eqfinv \label{eq:toy_stage_cost}
  \end{equation}
  with the energy cost $\Energy_t$ and the penalty cost $\Penalty_t$
  given by
  \begin{align}
    \Energy_t(\del_{t+1})	
    &= -c_t \Delta_t \del_{t+1} \eqfinv
    \\
    \Penalty_t(\del_{t+1}, \prof_t)
    &= \lambda c_t \Delta_t \abs{\del_{t+1} - \prof_t} \eqfinv \label{eq:toy_penalty}
  \end{align}
\end{subequations}
where $c_t$ is the (deterministic) energy price, expressed in \euro/MWh,
for $t \in \ic{0, \horizon}$ and $\lambda \geq 1$ is a penalty coefficient.
The final cost is then defined as
\begin{equation}
  \Final(\state) = -c_\horizon \state^\soc_\horizon \kappa \eqsepv
  \forall \state \in \RR^2 \eqfinp
  \label{eq:final_cost}%
\end{equation}
The cost structure defined by~\eqref{eq:overall_cost}-\eqref{eq:final_cost}
reflects the original formulation in~\cite{CRE},
except for the penalty cost $\Penalty_t$ in~\eqref{eq:toy_penalty}
that we have simplified for this illustrative example.

\subsubsubsection{Optimization Problem}
Gathering all components introduced above,
we define
the \emph{intraday value function} $\Phi$
as the value of the PMSOP
defined by~\eqref{eq:parametric_multistage_problem}.
Finally, we introduce the \emph{set of admissible parameter profiles}
\begin{equation}
  \ParamAd = \nc{0, \peak}^\horizon \eqfinv
  \label{eq:toy_constraint_set}
\end{equation}
and we consider the day-ahead optimization of the
expected management cost in~\eqref{eq:toy_stage_cost}
with respect to the commitment profile $\prof \in \ParamAd$,
that is, we want to solve
\begin{equation}
  \min_{\prof \in \ParamAd} \Phi(\prof) \eqfinp
  \label{eq:toy_problem}
\end{equation}

\subsection{Numerical Experiments}
\label{sec:day_ahead_problem_results}

We perform numerical experiments with
a single computer equipped with 4 Intel Core i7-7700K CPU
and 15 GB of RAM.
We use the package \texttt{SDDP.jl} \cite{dowson_sddp.jl}
for SDDP
together with the LP solver of CPLEX 12.9.
Apart from the solver, all our code is
implemented with the Julia language \cite{bezanson2012julia}.
Further implementation details are provided in Appendix~\ref{app:detailed_setup}.

First, we describe our experimental protocol,
where we introduce
two candidate methods to address Problem~\eqref{eq:toy_problem}.
Second, we comment on the results of the
two methods separately,
before finally confronting the results of both methods.

\subsubsubsection{Protocol for Numerical Experiments}

Our goal is to evaluate optimization methods for solving
Problem~\eqref{eq:toy_problem}.
The objective function $\Phi = \ValueFunction_0(\state_0 \sep \cdot)$
is convex (see Theorem~\ref{th:smooth_convex_problem})
and polyhedral
(due to the polyhedral penalty $\Penalty_t$ in~\eqref{eq:toy_penalty}
and other affine constraints,
see e.g. arguments in~\cite[\S3.2.1]{Shapiro-Dentcheva-Ruszczynski:2014}),
hence nondifferentiable.
We consider two methods.
\begin{itemize}
\item \textbf{$\boldsymbol{\mu}$SDP}:
  we address Problem~\eqref{eq:toy_problem}
  by solving
  \begin{equation}
    \min_{\prof \in \ParamAd} \ \SmoothValueFunction_0(\state_0 \sep \prof) \eqfinv
    \label{eq:toy_smooth_problem}
  \end{equation}
  following the gradient-based first-order optimization method introduced
  in~\S\ref{sec:first_order_methods}. In particular, as
  Problem~\eqref{eq:toy_problem} is not smooth, we resort to the alternative of
  Algorithm~\ref{alg:first-order} based on the lower smooth value function
  $\na{\SmoothValueFunction_t}_{\ic{0, \horizon}}$ defined
  by~\eqref{eq:smooth_value_functions} and compute gradients
  $\nabla_\prof \SmoothValueFunction_0(\state_0 \sep \cdot)$ using the Bellman-like
  recursion~\eqref{eq:backward_induction_Moreau} of
  Theorem~\ref{th:lower_smooth_properties}.

\item \textbf{$\boldsymbol{k}$SDDP}:
  we address Problem~\eqref{eq:toy_problem}
  by solving
  \begin{equation}
    \min_{\prof \in \ParamAd} \ \PolyhedralValueFunctionbis_0^k
    \bp{(\state_0, \prof)}
    \eqfinv
    \label{eq:toy_polyhedral_problem}
  \end{equation}
  following the the SDDP-based first-order optimization method introduced
  in~\S\ref{sec:first_order_methods_SDDP}.  We recall that this method is also
  an alternative to Algorithm~\ref{alg:first-order}, where we use a first-order
  oracle based on SDDP with the extended state space $\STATE \times \PARAM$ for phase
  $(\bigstar)$ of Algorithm~\ref{alg:first-order} in~\S\ref{sec:first_order_methods}.
\end{itemize}

For both methods, we consider several implementations,
playing on the coefficients that
affect the precision in the approximation of the objective function~$\Phi$
and the computing time.
\begin{itemize}
\item For $\mu$SDP, we have 12 implementations, characterized
  by the size of the discrete grids for state and control
  variables. We keep the regularization coefficient~$\mu$
  fixed and equal to~$0.1$ for all 12 implementations.
\item For $k$SDDP, we have 9 implementations, characterized by
  the number of forward-backward passes $k\in\NN$ run by
  the SDDP algorithm.
\end{itemize}

Each implementation of a method is processed as follows, following the
  approach developed in Sect.~\ref{sec:experimental_assessment_based_on_SDDP}.

$(i)$
First, we compute a commitment profile $\prof\opt \in \ParamAd$
as a solution of~\eqref{eq:toy_smooth_problem}
or~\eqref{eq:toy_polyhedral_problem}.
We initialize $\prof_0 = 0 \in \RR^\horizon$
and use a dynamic step size of $\eta_i = 10^3/i$ 
for both methods~$\mu$SDP and~$k$SDDP.
Note that~\eqref{eq:toy_smooth_problem}
is a smooth problem, and we could expect better performance with line
search. Yet, we keep the same step size for both methods
to focus the experiment on the performance of the oracles.
As for our stopping rule,
we stop the computation if we exceed 100 iterations,
or if the progress of the objective value is not larger than $\pm$0.5\%
for 5 consecutive iterations.
We report time performance for computing $\param\opt$,
both in term of overall computing time
(Figure~\ref{fig:overall_time}, $X$-axis in log scale, the lower the better)
and in term of average time per oracle call
(Figure~\ref{fig:oracle_time}, idem).

$(ii)$ Second, we compute an estimation of the optimal value~$\Phi(\param\opt)$
with SDDP, following the method introduced in~\S\ref{sec:estimation_Phi}.  In
Figure~\ref{fig:overall_time} and Figure~\ref{fig:oracle_time}, we report cost
performance ($Y$-axis, the lower the better), where the height of a marker spans
over the interval
$\nc{\underline{\ValueFunction}_{0}(\state_0; \param\opt),
  \overline{\ValueFunction}_{0}(\state_0; \param\opt)}$ on the $Y$-axis,
representing an estimation of the intraday value $\Phi(\prof\opt)$, from the
inequality~\eqref{eq:estimating_bounds}.  We report that, for each
implementation, the gap between the lower bound
$\underline{\ValueFunction}_{0}(\state_0; \param\opt)$ --- computed by running
2,000 forward-backward passes of SDDP--- and the expected simulation cost
$\overline{\ValueFunction}_{0}(\state_0; \param\opt)$ in~\eqref{eq:upper_V} ---
computed by Monte-Carlo simulation, sampling 25,000 scenarios --- is lower than
1.7\%.

Further detailed results are available in~Appendix~\ref{app:detailed_results}.

\begin{figure}[htpb]
  \centering
  \includegraphics[width=0.8\textwidth]{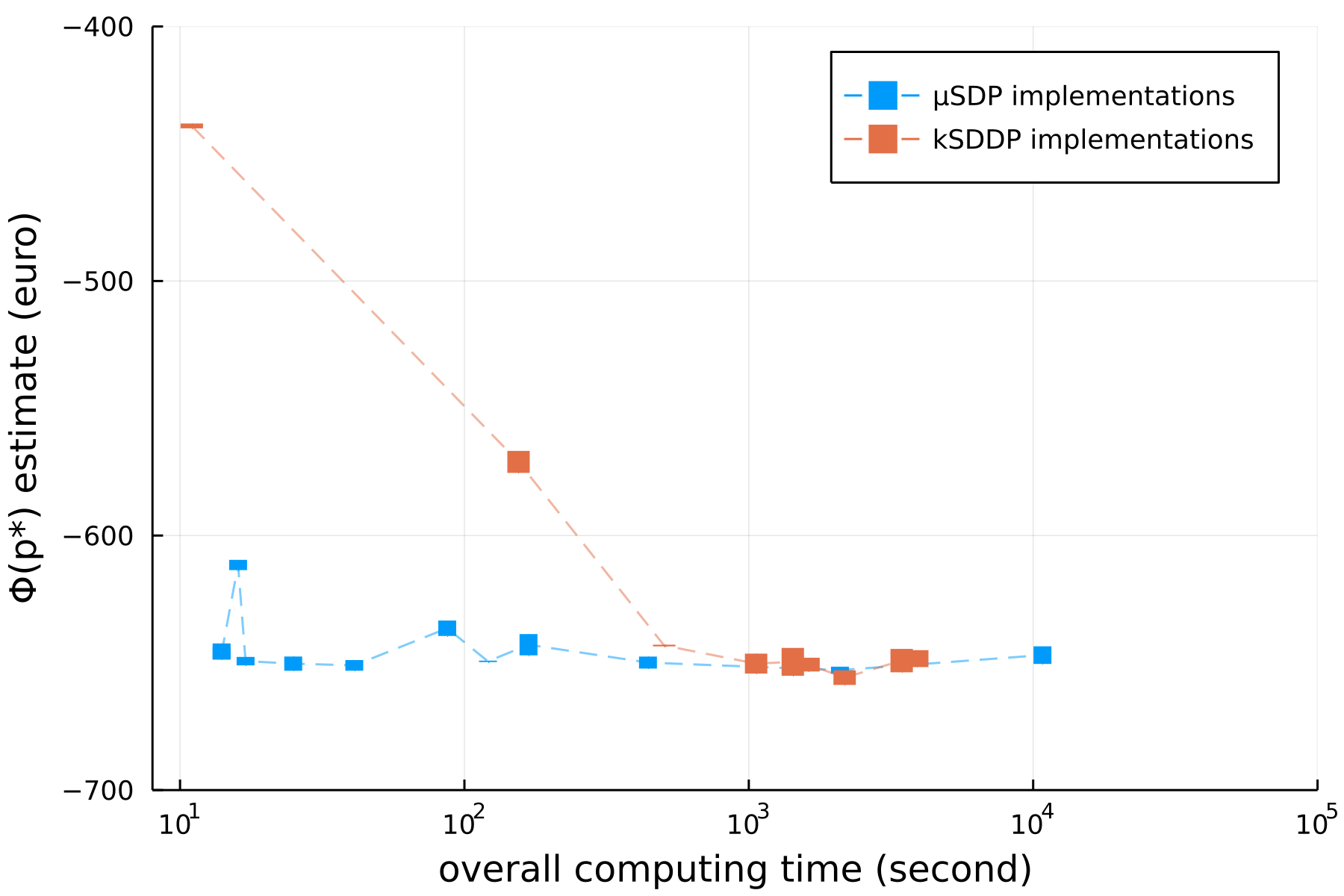}
  \captionsetup{width=\linewidth}
  \caption{
    Estimate of $\Phi(\prof\opt) \in \nc{\underline{\ValueFunction}_{0}(\state_0; \param\opt), \overline{\ValueFunction}_{0}(\state_0; \param\opt)}$
    in~\eqref{eq:estimating_bounds}
    (marker span on the $Y$-axis)
    for implementations of
    $\mu$SDP (in blue) and
    $k$SDDP (in orange);
    and overall computing time
    for the computation of $\prof\opt$
    ($X$-axis in log scale). For both axis, the lower the values the better.}
  \label{fig:overall_time}
\end{figure}

\begin{figure}[htpb]
  \centering
  \includegraphics[width=0.8\textwidth]{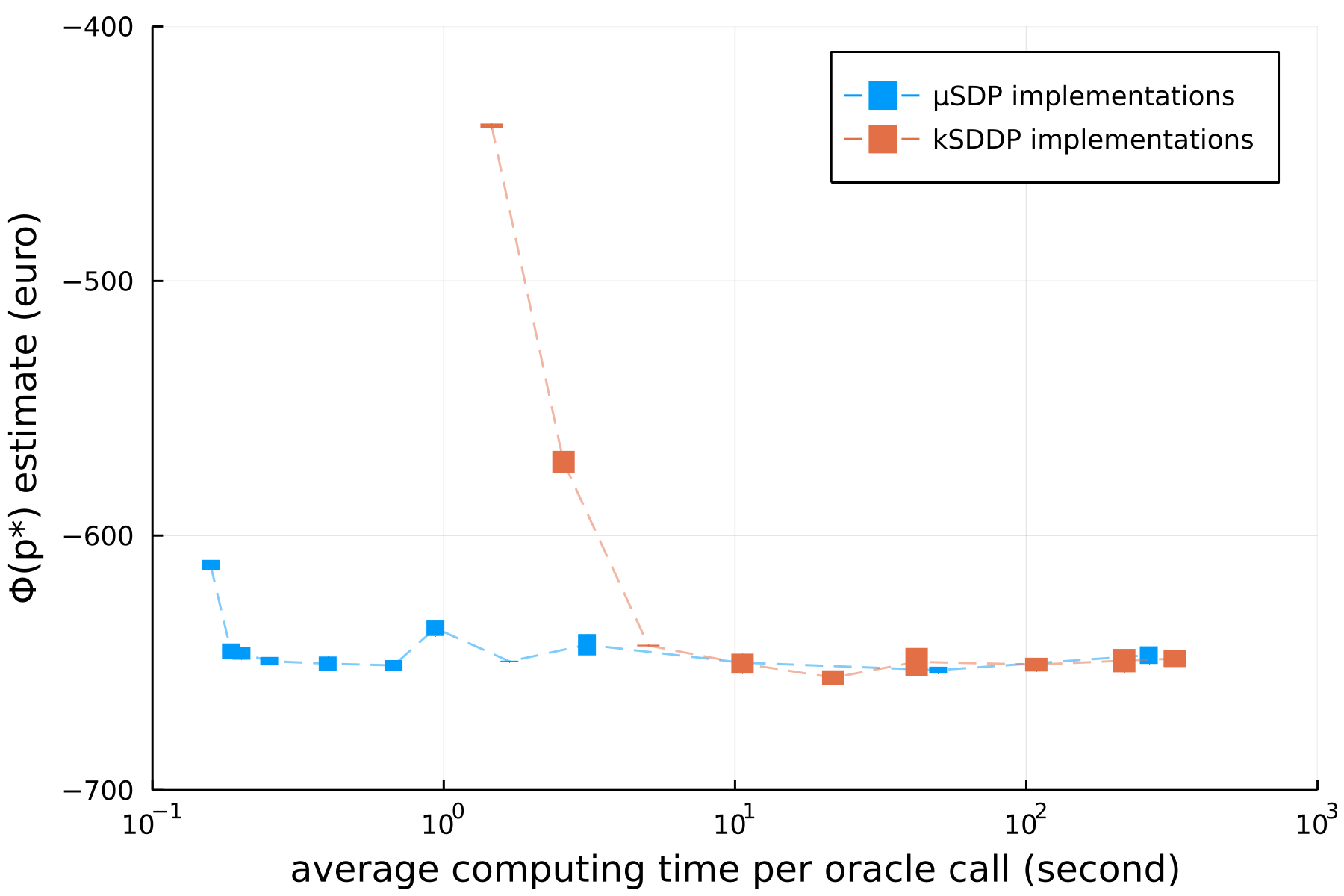}
  \captionsetup{width=\linewidth}
  \caption{
    Estimate of $\Phi(\prof\opt) \in \nc{\underline{\ValueFunction}_{0}(\state_0; \param\opt), \overline{\ValueFunction}_{0}(\state_0; \param\opt)}$
    in~\eqref{eq:estimating_bounds}
    (marker span on the $Y$-axis)
    for implementations of
    $\mu$SDP (in blue) and
    $k$SDDP (in orange);
    and average computing time per oracle call
    for the computation of $\prof\opt$
    ($X$-axis in log scale). For both axis, the lower the values the better.
  }
  \label{fig:oracle_time}
\end{figure}

\subsubsubsection{Results of $\boldsymbol{\mu}$SDP}

We comment on the results of implementations of $\mu$SDP,
represented by blue markers
on Figure~\ref{fig:overall_time} and Figure~\ref{fig:oracle_time}.
The performance for this method is related to
the size of the discrete grid introduced for
the state and control variables $(\state, \control)$ in~\eqref{eq:state}
and~\eqref{eq:control}.
In this experiment, we use 12 grid sizes
ranging from (5$\times$5, 11) points to (101$\times$101, 201) points.

We expect that
the finer the grid discretization, the more accurate the computation
of the value function $\SmoothValueFunction_{0}$ and of the gradient
$\nabla_\prof\SmoothValueFunction_{0}$, but also the longer the computing
time per oracle call.
Indeed, the worst cost performance
(highest estimated value of $\Phi(\prof\opt)$ on the $Y$-axis
of Figure~\ref{fig:overall_time} and Figure~\ref{fig:oracle_time})
is obtained with the coarsest grid discretization.
Although this implementation has the shortest average oracle time
(lowest value on the $X$-axis of Figure~\ref{fig:oracle_time}),
it requires more iterations of the projected gradient method
to stabilize, and is not the fastest implementation
in term of overall computing time
(second lowest value on the $X$-axis of Figure~\ref{fig:overall_time}).
More surprisingly, we find that the cost performance is greatly
improved by only adding one grid point to the state space
of the worst cost-performing implementation
(marker with the second lowest value on the $X$-axis of Figure~\ref{fig:oracle_time}),
and that cost performance stabilizes for further refinements of the grid
despite longer average oracle times
(third to twelfth markers on the same axis).

\subsubsubsection{Results of $\boldsymbol{k}$SDDP}

We comment on the results of implementations of $k$SDDP,
represented by orange markers
on Figure~\ref{fig:overall_time} and Figure~\ref{fig:oracle_time}.
The performance of these implementations is related to the
number $k \in \NN^*$ of forward-backward passes performed
by the SDDP algorithm at each iteration of the subgradient method.
We report results for a total of 9 implementations, with
$k \in \na{10, 20, 40, 80, 150, 250, 500, 750, 1000}$.

As expected, we observe that
the more forward-backward passes of the SDDP algorithm we run,
the more accurate the approximations of the value function
$\ValueFunction_{0}$ by $\PolyhedralValueFunctionbis_{0}^k$
and of the subdifferential $\partial_\prof\ValueFunction_{0}$
by $\partial_\prof\PolyhedralValueFunctionbis_{0}^k$.
This phenomenon is well illustrated by
Figure~\ref{fig:oracle_time},
where better cost performances,
corresponding to lower values on the $Y$-axis,
are obtained with longer computing
time per oracle call,
characterized by higher values on the $X$-axis
(where the higher the value of $k$,
the higher the position of the marker for implementations of $k$SDDP).
In general, we also report that low values of $k$
increase the noise of the oracle.
Indeed, for implementations
of $k$SDDP with $k \in\na{40, 80, 150}$,
the values of $\PolyhedralValueFunctionbis_{0}^k
\bp{(\state_0, \cdot)}$
do not stabilize after 100 iterations of the subgradient method,
whereas
implementations with $k \in \na{250, 500, 750, 1000}$
reach convergence in respectively $\na{34, 15, 16, 12}$ iterations.
As for $k \in\na{10, 20}$,
the SDDP algorithm only samples
a few scenarios in these cases,
and seems to fail
to obtain accurate representations of the
value function $\ValueFunction_{0}$.
Observations on oracle stability are backed by
more detailed results
available in Appendix~\ref{app:detailed_results}.

\subsubsubsection{Cross-Method Comparison}

We compare the results obtained with $\mu$SDP versus
the ones obtained with $k$SDDP.
We observe in Figures~\ref{fig:overall_time}
and~\ref{fig:oracle_time}
that $\mu$SDP (blue markers) almost attains its best cost performance
with a value of the expected simulation cost
$\overline{\ValueFunction}_{0}(\state_0 \sep \param\opt)$ in~\eqref{eq:upper_V}
of -648~\euro\ (upper value of the span of the markers
on the $Y$-axis) with only 0.25 seconds spent per oracle call
(Figure~\ref{fig:oracle_time}, $X$-axis)
and only 17 seconds of overall computing time
(Figure~\ref{fig:overall_time}, $X$-axis).
Comparatively,
for $k$SDDP implementations (orange markers), we need to
perform at least $k=80$ forward-backward passes of the SDDP algorithm
to attain $\overline{\ValueFunction}_{0}(\state_0 \sep \param\opt)$= -646 \euro\
(upper value of the span of the markers on the $Y$-axis).
For this value of $k$, we spend on average 10.2 seconds
per oracle call (Figure~\ref{fig:oracle_time}, $X$-axis),
and the overall computing time
is of 1,061 seconds (Figure~\ref{fig:overall_time}, $X$-axis).
We conclude that the $\mu$SDP method
performs better than $k$SDDP in our experiments, both in term
of time performance and in term of cost performance.
In particular, we argue that this result
illustrates the pertinence of treating the parameter
$\param$ in~\eqref{eq:parameter} apart from the state variables
to address Problem~\eqref{eq:parametric_multistage_problem},
especially when the parameter space $\PARAM$
is of much larger dimension than the state space $\STATE$.

\section{Conclusion}

We have studied differentiability properties (with respect to a parameter) of
a class of parametric multistage stochastic optimization problems.
Our main finding is that, under differentiability
and convexity assumptions,
we manage to compute, by backward induction, the gradient of the value function of the problem
with respect to the parameter.

In the case where the differentiability assumption is not fulfilled,
we have proposed a method for obtaining lower approximations
of the original parametric value functions by regularization.
We have also shown that such approximations
let us approach the value and a solution of the
original problem as closely as desired --- provided the
regularization coefficient is sufficiently small.

A numerical test case inspired from power scheduling
reveals that
our gradient computation technique
is efficient to formulate first-order oracles
in convex parametric multistage stochastic optimization.

In particular, we report that treating the parameter
apart from the state variables --- as proposed in our
backward induction for gradients ---
can be
advantageous in term of computing time performance,
especially when the problem is structured with
a large parameter space.

\appendix

\section{Appendix to Numerical Experiments}

\subsection{Data and Implementation Details}
\label{app:detailed_setup}

First, we detail the implementation of
the probabilistic model introduced
in~\S\ref{sec:day_ahead_problem_statement}.
We use one year of historical photovoltaic power
data from the public platform of
the Australian transmission
system operator Ausgrid \cite{ausgrid}.
We scale the generated power data to simulate the
operating of a solar power plant with an installed peak power
$\peak = 1$ MW.
Then, we use a standard linear regression to
calibrate the weights of the linear model
in~\eqref{eq:dynamics_toy},
and we perform a quantization of the
support of the error noise process $\sequence{\rvUncertain_t}{t \in \ic{1, \horizon}}$ with the $K$-means algorithm.
This latter technique lets us compute
discrete probability distributions for
each random variable in
$\sequence{\rvUncertain_t}{t \in \ic{1, \horizon}}$.
We refer the reader to~\cite{rujeerapaiboon2018scenario}
for the theoretical motivations of this quantization scheme.

Second, we provide implementation details for the oracles
of the two methods introduced in~\S\ref{sec:day_ahead_problem_results}.
For the oracle of the $\mu$SDP method,
we implement the backward recursions
in~\eqref{eq:smooth_value_functions} (for value functions)
and in~\eqref{eq:backward_induction} (for gradients)
by parallelizing computations across a discrete grid of states.
We also use a discrete grid for controls.
As for the oracle of the $k$SDDP method,
we use the built-in parallelization
scheme of \texttt{SDDP.jl} to run forward-backward passes
in asynchronous mode.

Lastly, concerning other parameters,
we take $\kappa = 1$ MWh, $\maxBat = -\minBat = 1$ MW,
and $\rho_c = \rho_d = 0.95$
for the battery parameters;
$c_t = 0.4$ \euro\
for the off-peak energy price
and $c_t = 0.6$ \euro\
for the on-peak energy price,
with a 2-hours peak spanning over [19:00, 21:00];
and $\lambda = 2$ for the penalty cost
in~\eqref{eq:toy_penalty}.

\subsection{Detailed Results for All Implementations}
\label{app:detailed_results}

We provide additional details on the numerical results
of each implementations
of the $\mu$SDP and $k$SDDP methods
considered in the experiments of~\S\ref{sec:day_ahead_problem_results}.

\begin{table}[H]
  \centering
  \begin{tabular}{l|rrrrrr}
    \begin{tabular}{@{}c@{}}
      $\boldsymbol{\np{\state, \control}}$ \\ \textbf{grid size} \end{tabular}
    & \begin{tabular}{@{}c@{}}
        \textbf{Iterative} \\ \textbf{steps}\end{tabular}
    & \begin{tabular}{@{}c@{}c@{}}
        \textbf{Overall} \\ \textbf{time} \\
        (seconds)\end{tabular}
    & \begin{tabular}{@{}c@{}c@{}}
        \textbf{Avg. time /} \\ \textbf{oracle call} \\ (seconds)\end{tabular}
    & \begin{tabular}{@{}c@{}c@{}}
        $\boldsymbol{\underline{\ValueFunction}_{0}(\state_0 \sep \param\opt)}$ \\
        \textbf{in \eqref{eq:estimating_bounds}} \\ (\euro)\end{tabular}
    & \begin{tabular}{@{}c@{}c@{}}
        $\boldsymbol{\overline{\ValueFunction}_{0}(\state_0 \sep \param\opt)}$ \\
        \textbf{in \eqref{eq:estimating_bounds}} \\ (\euro)\end{tabular}
    & \begin{tabular}{@{}c@{}}
        \textbf{Gap} \\ (\%)\end{tabular}
    \\
    \hline
    5$\times$5, 11
    & 97 & 16 & 0.16 & -613.6 & -609.6 & 0.7 \\
    5$\times$6, 11
    & 78 & 14 & 0.19 & -648.5 & -642.4 & 0.9 \\
    6$\times$6, 11
    & 69 & 14 & 0.20 & -648.8 & -643.6 & 0.8 \\
    6$\times$6, 21
    & 66 & 17 & 0.25 & -651.0 & -647.7 & 0.5 \\
    \begin{tabular}{c@{}c@{}} 6$\times$11, \\ 21 \end{tabular}
    & 63 & 25 & 0.40 & -653.1 & -647.5 & 0.9 \\
    \begin{tabular}{c@{}c@{}} 11$\times$11, \\ 21 \end{tabular}
    & 54 & 41 & 0.67 & -653.1 & -648.9 & 0.6 \\
    \begin{tabular}{c@{}c@{}} 11$\times$11, \\ 41 \end{tabular}
    & 93 & 87 & 0.94 & -639.4 & -633.4 & 0.9 \\
    \begin{tabular}{c@{}c@{}} 11$\times$21, \\ 41 \end{tabular}
    & 72 & 121 & 1.7 & -649.7 & -649.1 & 0.1 \\
    \begin{tabular}{c@{}c@{}} 21$\times$21, \\ 41 \end{tabular}
    & 54 & 168 & 3.1 & -647.1 & -638.7 & 1.3 \\
    \begin{tabular}{c@{}c@{}} 21$\times$21, \\ 201 \end{tabular}
    & 42 & 442 & 10.5 & -652.3 & -638.7 & 1.3 \\
    \begin{tabular}{c@{}c@{}} 21$\times$101, \\ 201 \end{tabular}
    & 42 & 2092 & 49.8 & -654.2 & -651.5 & 0.4 \\
    \begin{tabular}{c@{}c@{}} 101$\times$101, \\ 201 \end{tabular}
    & 41 & 10781 & 263.4 & -650.4 & -643.5 & 1.0 \\
  \end{tabular}
  \captionsetup{width=\textwidth}
  \caption{
    Detailed numerical performances for implementations
    of the $\mu$SDP method, characterized by the size
    of the discrete grids for state and control variables
    in the first column.
    Other columns report the
    number of iterations performed
    (second column),
    time performances (third and fourth columns),
    together with the lower bound
    $\underline{\ValueFunction}_{0}(\state_0 \sep \param\opt)$
    (fifth column), the expected simulation cost
    $\overline{\ValueFunction}_{0}(\state_0 \sep \param\opt)$
    (sixth column),
    and the estimation gap (seventh column)
    expressed as a percentage of $\underline{\ValueFunction}_{0}(\state_0 \sep \param\opt)$.
    For columns 2-6, the lower the values the better the performance
    of the instance.}
  \label{tab:muSDP_PGD}
\end{table}

\begin{table}[H]
  \centering
  \begin{tabular}{l|rrrrrr}
    $\boldsymbol{k}$
    & \begin{tabular}{@{}c@{}}
        \textbf{Iterative} \\ \textbf{steps}\end{tabular}
    & \begin{tabular}{@{}c@{}c@{}}
        \textbf{Overall} \\ \textbf{time} \\
        (seconds)\end{tabular}
    & \begin{tabular}{@{}c@{}c@{}}
        \textbf{Avg. time /} \\ \textbf{oracle call} \\ (seconds)\end{tabular}
    & \begin{tabular}{@{}c@{}c@{}}
        $\boldsymbol{\underline{\ValueFunction}_{0}(\state_0 \sep \param\opt)}$ \\
        \textbf{in \eqref{eq:estimating_bounds}} \\ (\euro)\end{tabular}
    & \begin{tabular}{@{}c@{}c@{}}
        $\boldsymbol{\overline{\ValueFunction}_{0}(\state_0 \sep \param\opt)}$ \\
        \textbf{in \eqref{eq:estimating_bounds}} \\ (\euro)\end{tabular}
    & \begin{tabular}{@{}c@{}}
        \textbf{Gap} \\ (\%)\end{tabular}
    \\
    \hline
    10 & 7 & 11 & 1.5 & -440.0 & -438.1 & 0.4 \\
    20 & 60 & 155 & 2.6 & -575.4 & -566.7 & 1.5 \\
    40 & 100 & 504 & 5.0 & -643.5 & -642.9 & 0.1 \\
    80 & 100 & 1061 & 10.6 & -654.2 & -646.4 & 1.2 \\
    150 & 100 & 2173 & 21.7 & -658.7 & -652.9 & 0.8 \\
    250 & 34 & 1428 & 42.0 & -655.1 & -644.1 & 1.7 \\
    500 & 15 & 1622 & 108.2 & -653.4 & -648.0 & 0.8 \\
    750 & 16 & 3448 & 216.9 & -653.7 & -644.5 & 1.4 \\
    1000 & 12 & 3912 & 323.4 & -651.6 & -645.0 & 1.0 \\
  \end{tabular}
  \captionsetup{width=\textwidth}
  \caption{
    Detailed numerical performances for implementations
    of the $k$SDDP method, characterized by $k$ in the first column.
    Other columns report the
    number of iterations performed
    (second column),
    time performances (third and fourth columns),
    together with the lower bound
    $\underline{\ValueFunction}_{0}(\state_0 \sep \param\opt)$
    (fifth column), the expected simulation cost
    $\overline{\ValueFunction}_{0}(\state_0 \sep \param\opt)$
    (sixth column),
    and the estimation gap (seventh column)
    expressed as a percentage of $\underline{\ValueFunction}_{0}(\state_0 \sep \param\opt)$.
    For columns 2-6, the lower the values the better the performance
    of the instance}
  \label{tab:kSDDP_PSM}
\end{table}


\end{document}